\definecolor{rosso}{rgb}{0.8,0,0}
\def\pier #1{{\color{blue}#1}}
\def\takeshi #1{{\color{magenta}#1}}
\def\fukao #1{{\color{magenta}#1}}
\def\pier #1{#1}
\def\takeshi #1{#1}
\def\fukao #1{#1}
\def\enne{\mathbb{N}}
\def\zeta{\mathbb{Z}}
\def\erre{\mathbb{R}}
\def\eps{\varepsilon}
\def\bH{\mbox{\boldmath $H$}}
\def\bV{\mbox{\boldmath $V$}}
\def\bZ{\mbox{\boldmath $Z$}}
\def\bW{\mbox{\boldmath $W$}}
\def\bsH{\mbox{\scriptsize\boldmath $H$}}
\def\bsV{\mbox{\scriptsize\boldmath $V$}}
\def\bsZ{\mbox{\scriptsize\boldmath $Z$}}
\def\bsW{\mbox{\scriptsize\boldmath $W$}}
\def\b#1{\mbox{\boldmath $#1$}}
\def\beq{\begin{equation}}
\def\eeq{\end{equation}}
\def\to{\rightarrow}
\def\wto{\rightharpoonup}
\def\wstarto{\stackrel{*}{\rightharpoonup}}
\def\embed{\hookrightarrow}
\def\norm #1{\left\|#1\right\|}
\def\sp #1#2{\left<#1,#2\right>}
\newcommand\ip\sp
\newtheorem{thm}{Theorem}[section]
\newtheorem{lem}[thm]{Lemma}
\newtheorem{prop}[thm]{Proposition}
\newtheorem{defin}[thm]{Definition}
\newtheorem{remark}[thm]{Remark}
\title{The Cahn--Hilliard equation with forward-backward 
dynamic boundary condition
via vanishing viscosity}
\author{Pierluigi Colli\\
Dipartimento di Matematica, Universit\`a degli Studi di Pavia\\
Via Ferrata~1, 27100 Pavia, Italy\\
E-mail: \texttt{pierluigi.colli@unipv.it}\\
\and \\ Takeshi Fukao\\
Department of Mathematics, Faculty of Education\\
Kyoto University of Education\\
1~Fujinomori, Fukakusa, Fushimi-ku, Kyoto~612-8522 Japan\\
E-mail: \texttt{fukao@kyokyo-u.ac.jp}
\and \\ Luca Scarpa\\
Department of Mathematics, Politecnico di Milano\\
Via E.~Bonardi 9, 20133 Milano, Italy\\
E-mail: \texttt{luca.scarpa@polimi.it}}
\date{}
\newcommand\testopari{\sc Pierluigi Colli, Takeshi Fukao, and Luca Scarpa}
\newcommand\testodispari{\sc {C}ahn--{H}illiard with forward-backward 
dynamic boundary condition}
\markboth{\testopari}{\testodispari}
\begin{document}

\maketitle

\begin{abstract}
An asymptotic analysis for a system with equation and dynamic boundary condition of Cahn\takeshi{--}Hilliard \takeshi{type} is carried out as the coefficient of the surface diffusion acting on the phase variable 
tends to $0$, thus obtaining a forward-backward dynamic boundary condition at the limit. 
This is done in a very general setting, with nonlinear terms admitting maximal monotone graphs 
both in the bulk and on the boundary. The two graphs are related by a growth condition, 
with the boundary graph that dominates the other one. It turns out that 
in the limiting procedure the solution of the problem looses some regularity
and the limit equation has to be properly interpreted in the sense of a subdifferential 
inclusion. However, the limit problem is still
well-posed since a continuous dependence estimate 
can be proved. Moreover, in the case when the two graphs exhibit the same growth,
it is shown
that the solution enjoys more regularity and the boundary condition holds almost 
everywhere. An error estimate can also be shown, for a suitable order of the diffusion parameter. 

\vspace{2mm}
\noindent \textbf{Key words:}~{C}ahn--{H}illiard system, 
dynamic boundary conditions, asymptotics, forward-backward equation,  well-posedness, error estimates.

\vspace{2mm}
\noindent \textbf{AMS (MOS) subject clas\-si\-fi\-ca\-tion:} 
35K61, 35K25, 35D30, 35B20 74N20, 
80A22.
\end{abstract}

\section{Introduction}
\setcounter{equation}{0}
We consider a pure Cahn--Hilliard equation in the form
\begin{align} 
	\partial_t u -\Delta \mu = 0 
	\qquad&\mbox{in } Q:=\Omega\times(0,T),
	\label{CH1}
	\\
	\mu = -\Delta u + F'(u)-f 
	\qquad&\mbox{in } Q,
	\label{CH2}
\end{align}
where $T>0$ is some fixed time, $\Omega \subset \mathbb{R}^{d}$ 
($d=2$ or $3$) is a bounded smooth domain 
with smooth enough boundary $\Gamma$,
and the symbols $\partial_t$ and $\Delta$ denote the 
partial time-derivative and the Laplacian with respect to the space variables, 
respectively. 

On the boundary, we deal with a dynamic condition also
of Cahn--Hilliard type, depending on a positive
parameter $\delta$, in the form 
\begin{align} 
	\label{trace}
	u_\Gamma =u _{|_\Gamma }, \quad \mu _\Gamma =\mu _{|_\Gamma }
	\qquad&\mbox{on } \Sigma:=\Gamma\times(0,T),\\
	\partial_t  u_\Gamma +\partial_\nu \mu -\Delta _\Gamma \mu _\Gamma =0
	\qquad&\mbox{on } \Sigma,
	\label{CHB1}
	\\
	\mu _\Gamma =
	\partial _\nu u - \delta\Delta _\Gamma u_\Gamma +F'_\Gamma (u_\Gamma )-f_\Gamma 
	\qquad&\mbox{on } \Sigma.
	\label{CHB2}
\end{align} 
Here, the notation $v_{|_\Gamma}$ is employed for the trace of a function $v :\Omega \to \mathbb{R} $ on the boundary $\Gamma$; besides,  $\partial_\nu$ and $\Delta_\Gamma$ denote the 
the outward normal derivative and the {L}aplace--{B}eltrami operator on $\Gamma$. 
At the initial time $t=0$, we assume that
\begin{align} 
	u(0)=u_0
	\qquad 
	&\mbox{in } \Omega, 
	\label{IC1}
	\\
	u_\Gamma  (0)=u_{0\Gamma}
	\qquad&\mbox{on }\Gamma.
	\label{IC2}
\end{align} 
The variables $u, \mu: Q \to \mathbb{R}$ and the respective ones  $u_\Gamma, \mu_\Gamma : \Sigma \to \mathbb{R}$ on the boundary represent 
the phase parameter and the chemical potential. Moreover, $f : Q \to \mathbb{R}$ and 
$f_\Gamma : \Sigma\to \mathbb{R}$ stand for two known source terms and $u_0, u_{0\Gamma} $ 
are the given initial data, in the bulk and on the boundary. The nonlinearities $F'$ and 
$F_\Gamma'$ owe their presence in the equations as derivatives of the double-well potentials $F$ and $F_\Gamma$.
 
This paper is dedicated to investigate the asymptotic behaviour of the system 
\eqref{CH1}--\eqref{IC2} as $\delta$ tends to $0$. The limit condition which is obtained 
on the boundary is rather interesting since, as we will explain later, it consists of a
forward-backward dynamic boundary condition.

From now, let us spend some words on the {C}ahn--{H}illiard system and
recall that it is a phenomenological model describing the spinodal decomposition in the framework of partial differential equations.  It originates from the work of J.~W.~Cahn~\cite{Ca61} and his collaboration with J.~E.~Hilliard~\cite{CH58}.
A number of research contributions in recent times has been devoted 
to Cahn--Hilliard and viscous Cahn--Hilliard~\cite{No88, NP} systems. 
An impressive amount of related references can be found in the literature, in particular 
we may refer to the review paper\cite{Mi17} and references therein. The coupling of  
{C}ahn--{H}illiard and other systems with dynamic boundary conditions turns out to be a 
research theme that has been developed quite intensively in the last twenties. If one considers a boundary dynamics of heat equation type, the well-posedness issue has been treated in \cite{RZ03} and a study of the convergence to equilibrium is shown in \cite{WZ04}. Since then, the {C}ahn--{H}illiard system with nonlinear equations as dynamic boundary condition (including the {A}llen--{C}ahn equation), was addressed from different viewpoints and studied in several papers: 
among other contributions we quote\takeshi{\cite{CF15, CF20, CFW, CGS14, CGS15, CGS17bis, CGS18, CGS18ter, FW21, FYW17, Gal07, GMS11, GMS09, LW19, OFFY20, Sca19}}. The \pier{articles} \cite{CF20bis, CGNS17, CGS17} refer instead to similar approaches but for different equations in the domain. Let us also mention the papers \cite{CGS15, CGS18bis, CS19} devoted to the analysis of optimal control problems for {some C}ahn--{H}illiard systems coupling equation and dynamic boundary condition. 
For completeness, let us also mention that vanishing diffusion studies 
on Cahn\takeshi{--}Hilliard and Allen\takeshi{--}Cahn equations have been pursued also 
in the case of stochastic forcing, for which we refer to \cite{Sca20}
and \cite{OS19}, respectively.

Therefore, the problem (\ref{CH1})--(\ref{IC2}) yields the 
{C}ahn--{H}illiard system in the bulk and on the boundary, where the 
variables on the boundary are the traces of the respective ones and equations
dynamic boundary conditions have the same structure as Cahn--Hillird systems. 
The related initial-boundary value problem has been investigated -- in the case $\delta >0$ -- in the paper~\cite{CF15} and also in~\cite{CGS18} when convection effects are taken into account. Both analyses allow the presence of singular and non-smooth potentials 
for $F$ and $F_\Gamma$. In fact, typical examples for these potentials
are the so-called classical regular potential, the logarithmic potential,
and the double obstacle potential, which are defined by
\begin{align*}
  & {F}_{\rm reg}(r) := \frac 14 \, (r^2-1)^2 \,,
  \quad r \in \mathbb{R}\, , 
  \\[2mm]
  &
  F_{\rm log}(r)
  := \left\{\begin{array}{ll}
    (1+r)\ln (1+r)+(1-r)\ln (1-r) - c_1 r^2\,,
    & \quad r \in (-1,1)
    \\[1mm]
    2\,{\ln}(2)-c_1\,,
    & \quad r\in\{-1,1\}
    \\[1mm]
    +\infty\,,
    & \quad r\not\in [-1,1]
  \end{array}\right. ,
  \\[2mm]
  & {F}_{\rm obs}(r) 
  := \left\{\begin{array}{ll}
    c_2 (1-r^2)\,,
    & \quad r \in [-1,1]
    \\[1mm]
    +\infty\,,
    & \quad r\not\in [-1,1]  
    \end{array}\right. ,  
\end{align*}
where 
$c_1>1$ and $c_2>0$ are constants, with the role of rendering 
${F}_{\rm log}$ and ${F}_{\rm obs}$ nonconvex.
Here, as in \cite{CF15, CGS18} we split the nonlinear contributions $F'$ in \eqref{CH2} and $F_\Gamma'$ in \eqref{CHB2} into two parts, i.e., we let $F'=\beta+\pi$ and $F_\Gamma'=\beta_\Gamma+\pi_\Gamma$,
where $\beta$, $\beta_\Gamma$ are the monotone parts, i.e. the derivatives, or in general the subdifferentials, of the convex parts of $F$ and $F_\Gamma$, and 
$\pi$, $\pi_\Gamma$ stand for the (smooth) anti-monotone parts.
In particular, for the classical regular potential 
$F_{\rm reg}'=\beta_{\rm reg}+\pi_{\rm reg}$
is exactly the derivative of $F_{\rm reg}$, that is
\begin{equation*}
	F'_{\rm reg}(r)=r^3-r, \quad \hbox{with} \quad \beta_{\rm reg}(r):=r^3, \quad \pi_{\rm reg}(r):=-r,
\end{equation*}
while for the non-smooth double obstacle potential $F_{\rm obs}$ we have that
$\beta_{\rm obs}$ is the subdifferential of the indicator function of $[-1,1]$, so to have 
\begin{equation*}
	F'_{\rm obs}(r)=\partial I_{[-1,1]}(r) - 2 c_2 r, \quad \beta_{\rm obs}(r):=\partial I_{[-1,1]}(r), \quad \pi_{\rm obs}(r):=-2 c_2r. 
\end{equation*}
As a general rule, we employ subdifferentials for $\beta$, $\beta_\Gamma$,
which reduce to the derivatives whenever these exist. 
Please note that the subdifferentials 
may also be multivalued graphs, as it happens for $\partial I_{[-1,1]} (r) $ when $r=-1$ or $r=1$. 
Thus, we generally interpret equation (\ref{CH2}) as
\begin{equation} 
	\mu = - \Delta u +\xi + \pi(u) -f, \quad 
	\xi \in \beta(u)
	\quad \text{in }Q
		\label{pi2}
\end{equation}
and the boundary condition (\ref{CHB2}) as
\begin{align} 
	&\mu_\Gamma = \partial_{\boldsymbol{\nu}} u  - \delta \Delta_\Gamma u_\Gamma 
	+\xi_\Gamma + \pi_\Gamma (u_\Gamma) -f_\Gamma,
	\quad  \xi_\Gamma \in \beta_\Gamma(u_\Gamma)
	\quad \text{on }\Sigma.
	\label{pib2}
\end{align}
Of course, different potentials can be considered for $F$ and $F_\Gamma$, leading
in particular to different graphs $\beta$ and $\beta_\Gamma$. 
About possible relations 
between $\beta$ and $\beta_\Gamma$, following a rather usual approach
(cf., e.g., \cite{CC13, CF15, CF20bis, CFW, CGNS17, CGS14, CGS17, CGS18, LW19, Sca19}) 
we assume in general 
that  $\beta_\Gamma$ dominates $\beta$ in the sense of assumption {\bf A2}  
in Section~2. For better regularity results (cf. the later 
Theorems~\ref{thm:conv2} and~\ref{thm:conv3}) we let $\beta$ and $\beta_\Gamma$
have the same growth (cf\takeshi{.} assumption~\eqref{ip_extra}). 
Thus, in our framework it is always 
possible to choose similar or even equal graphs $\beta$ and~$\beta_\Gamma$.

This paper is dedicated to the asymptotic analysis as the surface diffusion term on the dynamic boundary condition~(\ref{pib2}) tends to $0$. 
By the asymptotic limit as $\delta \searrow 0$, one aims to obtain at the limit 
the solution of the problem without surface diffusion, i.e., with \eqref{pib2} 
possibly replaced by 
\begin{align} 
	&\mu_\Gamma = \partial_{\boldsymbol{\nu}} u
	+\xi_\Gamma + \pi_\Gamma (u_\Gamma)
	-f_\Gamma,
	\quad  \xi_\Gamma \in \beta_\Gamma(u_\Gamma)
	\quad \text{on }\Sigma.
	\label{pib3}
\end{align}
It turns out that this program is doable, as shown by our analysis, by accepting that  
the solution of the limiting problem looses some regularity, due to the absence 
of the diffusive term in \eqref{pib3}. Indeed, in general the terms 
$\partial_{\boldsymbol{\nu}} u$ and $\xi_\Gamma$ in \eqref{pib2} are not 
functions but elements of a dual space, and the inclusion 
$ \xi_\Gamma \in \beta_\Gamma(u_\Gamma)$ has to be suitably 
reinterpreted in the sense of inclusion for a subdifferential 
operator acting from a space on the boundary to its dual space. 
However, the solution of the limiting problem turns out to be uniquely determined
at least for what concerns the components $(u,u_\Gamma) $. Moreover,  
in the case where the graphs $\beta$ and $\beta_\Gamma$
exhibit the same growth, we demonstrate that the boundary condition 
\eqref{pib3} holds almost everywhere on $\Sigma$. 
Besides that, in such a case we are even able to prove an error estimate 
of order $\delta^{1/2}$ between the solution of the problem 
with surface diffusion in \eqref{pib2} and that of the limiting problem with \eqref{pib3}.   

Two special references related to our investigation are the recent papers~\cite{Sca19} 
and \cite{CF15}, which deal with Cahn--Hilliard systems 
in the bulk with dynamic boundary condition of Allen--Cahn type, and extensions of them 
in \cite{Sca19} as well. In fact, our asymptotic results can be compared with the ones contained in 
these papers, where \cite{CF15} also examines the case of the graphs $\beta$ and $\beta_\Gamma$
having the same growth. However, no error estimate is discussed in \cite{CF15, Sca19} 
(as instead we do here).

The limiting equation \eqref{pib3} that 
we obtain on the boundary, when coupled to \eqref{CHB1},
yields a forward-backward type equation since the surface diffusion operator present in \eqref{CHB1}
eventually applies to $\mu_\gamma $ in \eqref{pib3}, and $\mu_\Gamma$ is given here in terms of a non-monotone 
function (i.e., $\beta_\Gamma+\pi_\Gamma$) of the phase variable $u_\Gamma$. About forward-backward 
equations and possible regularizations of them we can quote\takeshi{\cite{BCT17, BCST18, BCST20, CS16, Tomas, TST14}} and referenced therein. 

The main novelty of this paper is that we can give rigorous sense 
to a forward-backward dynamic on the boundary in terms of well-posedness
of the whole system. Indeed,
we underline that, unless special cases, 
the evolution problems for a single forward-backward equation are ill-posed.
Here, nonetheless, we show that the
coupling of the badly-behaving boundary condition
with the Cahn\takeshi{--}Hilliard equation in the interior
is somehow strong enough to ensure solvability of 
the boundary forward-backward dynamics.
We also point out that the Cahn\takeshi{--}Hilliard equation itself 
may be actually seen as an elliptic space-regularisation of a forward-backward 
equation by means of the local diffusion operator $-\Delta$.
Consequently, the choice of the limit forward-backward 
boundary condition is extremely natural, 
and corresponds to the intuitive degenerate limit 
of the Cahn\takeshi{--}Hilliard equation 
with no diffusion regularization on the boundary.

The present paper is structured as follows.
In Section~\ref{sec:main}, after setting up the notation and the basic tools for a precise interpretation
of the problem, we present the main theorems. First, we recall the well-posedness result for the case $\delta>0$; then, we state the convergence-existence result as $\delta $ goes to, and becomes 
$0$ at the limit, including the continuous dependence with respect to data and 
the uniqueness of the solution for the limit problem. There are two more statements, dedicated to 
an improvement of the convergence-existence theorem and to the error estimate in the case 
when the graphs $\beta$ and $\beta_\Gamma$ show the same growth. Section~\ref{sec:proof} is devoted to the proofs, in this order: we start with proving the uniform estimates for all $\delta \in (0,1)$, hence passing to the limit as $\delta \searrow 0$; then, we deal with the continuous dependence estimate, we examine the refined convergence and show the error estimate of order $\delta^{1/2}$. There is also a Section~\ref{appendix} with auxiliary results for equivalence of norms and approximation of initial data. 

\section{Setting and main results}
\setcounter{equation}{0}
\label{sec:main}

In this section, we rigorously introduce the 
variational setting and the main assumptions of the work,
and we state our main results.

Throughout the paper, $\Omega\subset\mathbb{R}^d$ ($d=2,3$)
is a smooth bounded domain with smooth boundary $\Gamma$,
and $T>0$ is a fixed finite final time.
We use the classical notations
\[
  Q_t:=\Omega\times(0,t), \quad \Sigma_t:=\Gamma\times(0,t)
  \quad\text{for } t\in[0,T], \qquad
  Q:=Q_T,\quad\Sigma:=\Sigma_T.
\]
We are interested in the asymptotic behaviour as $\delta\searrow0$ of the
following initial-boundary value problem:
\begin{align} 
	\partial_t u -\Delta \mu = 0 
	\qquad&\mbox{in } Q, 
	\label{eq1}
	\\
	\mu \in -\Delta u + \beta(u) + \pi(u) -f 
	\qquad&\mbox{in } Q,
	\label{eq2}\\
	u_\Gamma =u _{|_\Gamma }, \quad \mu _\Gamma =\mu _{|_\Gamma }
	\qquad&\mbox{on } \Sigma,\\
	\partial_t  u_\Gamma +\partial_\nu \mu -\Delta _\Gamma \mu _\Gamma =0
	\qquad&\mbox{on } \Sigma,
	\label{eq1_bound}
	\\
	\mu _\Gamma \in
	\partial _\nu u - \delta\Delta _\Gamma u_\Gamma +
	\beta_\Gamma (u_\Gamma ) + \pi_\Gamma(u_\Gamma) -f_\Gamma 
	\qquad&\mbox{on } \Sigma,
	\label{eq2_bound}\\
	u(0)=u_0
	\qquad 
	&\mbox{in } \Omega, 
	\label{eq1_init}
	\\
	u_\Gamma  (0)=u_{0\Gamma}
	\qquad&\mbox{on }\Gamma.
	\label{eq2_init}
\end{align} 

The following assumptions on the data are in order throughout the work.
\begin{description}
 \item[A1] $\widehat\beta,\widehat\beta _{\Gamma}:\erre\to[0,+\infty]$ are
 proper, convex, and lower semicontinous functions on $\erre$ satisfying the condition
 $\widehat\beta(0)=\widehat\beta_\Gamma(0)=0$. This implies that their subdifferentials
 \[
 \beta:=\partial\widehat\beta,\qquad
 \beta_\Gamma:=\partial\widehat\beta_\Gamma,
 \]
 are maximal monotone graphs in 
$\mathbb{R} \times \mathbb{R}$, 
with some effective domains $D(\beta )$ and $D(\beta _\Gamma)$, respectively,
and that $0 \in \beta (0)\cap\beta_\Gamma(0)$. 
 \item[A2] $D(\beta_\Gamma)\subseteq D(\beta)$ and there exists a constant $M>0$ such that
\beq\label{dom_beta}
  |\beta^\circ(r)|\leq M\left(1+|\beta_\Gamma^\circ(r)|\right)
  \quad\forall\,r\in D(\beta_\Gamma),
\eeq
where $\beta^\circ$ and $\beta_\Gamma^\circ$ denote the 
minimal sections of the graphs $\beta$ and $\beta_\Gamma$, respectively.
 \item[A3] $\pi $, $\pi _{\Gamma }: \mathbb{R} \to \mathbb{R}$ are {L}ipschitz continuous
with {L}ipschitz-constants $L$ and $L_{\Gamma}$, respectively, and we set 
\[
  \widehat\pi, \widehat\pi_\Gamma:\erre\to\erre, \qquad
  \widehat \pi(r):=\int_0^r\pi(s)\,ds, \quad
  \widehat \pi_\Gamma(r):=\int_0^r\pi_\Gamma(s)\,ds, \quad r\in\erre.
\]
We define for convenience of notation $\b\pi:=(\pi,\pi_\Gamma):\erre^2\to\erre^2$.
\end{description}

\subsection{Variational setting}
We describe here the variational setting that we consider and the 
concept of weak solution for the problem \eqref{eq1}--\eqref{eq2_init}.

We define the functional spaces
\begin{align*}
  &H:=L^2(\Omega), \quad V:=H^1(\Omega), \quad W:=H^2(\Omega),\\
  &H_\Gamma:= L^2(\Gamma), \quad
  Z_\Gamma:=H^{1/2}(\Gamma), \quad
  V_\Gamma:=H^1(\Gamma), \quad
  W_\Gamma:=H^2(\Gamma),
\end{align*}
endowed with their natural norms $\norm{\cdot}_{H}$,
$\norm{\cdot}_{V}$, $\norm{\cdot}_{W}$, 
$\norm{\cdot}_{H_\Gamma}$, $\norm{\cdot}_{Z_\Gamma}$,
$\norm{\cdot}_{V_\Gamma}$, $\norm{\cdot}_{W_\Gamma}$, and their 
scalar products $(\cdot,\cdot)_{H}$, $(\cdot,\cdot)_{V}$,
$(\cdot,\cdot)_{W}$,
 $(\cdot,\cdot)_{H_\Gamma}$
$(\cdot,\cdot)_{Z_\Gamma}$, $(\cdot,\cdot)_{V_\Gamma}$,
$(\cdot,\cdot)_{W_\Gamma}$.
We will denote by $z_{|\Gamma}$ the trace of 
the generic element $z\in V$.
Moreover, we set 
\begin{align*}
\bH&:=H\times H_\Gamma,\\
\bZ&:=\left\{ (z,z_\Gamma ) \in V \times Z_\Gamma \ : \  
z_\Gamma =z_{|_\Gamma} \text{ a.e.~on }\Gamma\right\},\\
\bV&:=\left\{ (z,z_\Gamma ) \in V \times V_\Gamma \ : \  
z_\Gamma =z_{|_\Gamma} \text{ a.e.~on }\Gamma\right\},\\
\b W&:=\left\{ (z,z_\Gamma ) \in W \times W_\Gamma \ : \  
z_\Gamma =z_{|_\Gamma} \text{ a.e.~on }\Gamma\right\}.
\end{align*}
Let us make clear now once and for all that we will use the bold 
notation $\mbox{\boldmath $z$}=(z,z_\Gamma)$ for the generic element in $\bH$.
Note that if $\mbox{\boldmath $z$}\in\bH$, then $z_\Gamma$ is not necessarily 
the trace of $z$ on the boundary: this is true only if at least $\mbox{\boldmath $z$}\in\bZ$.
Clearly, $\bH$, $\bZ$, $\bV$, and $\b W$ are Hilbert spaces 
with respect to the scalar products 
\begin{alignat*}{2}
	(\mbox{\boldmath $w$},\mbox{\boldmath $z$}
	)_{\mbox{\scriptsize \boldmath $ H$}}
	&:=(u,z)_{H} + (u_\Gamma ,z_\Gamma )_{H_\Gamma },
	\qquad&&\mbox{\boldmath $w$},\mbox{\boldmath $z$}\in\bH,\\
	(\mbox{\boldmath $ u $},\mbox{\boldmath $ z $}
	)_{\mbox{\scriptsize \boldmath $ Z$}}
	&:=(u,z)_{V} + (u_\Gamma ,z_\Gamma )_{Z_\Gamma },
	\qquad&&\mbox{\boldmath $w$},\mbox{\boldmath $z$}\in\bZ,\\
	(\mbox{\boldmath $ u $},\mbox{\boldmath $ z $}
	)_{\mbox{\scriptsize \boldmath $ V$}}
	&:=(u,z)_{V} + (u_\Gamma ,z_\Gamma )_{V_\Gamma },
	\qquad&&\mbox{\boldmath $w$},\mbox{\boldmath $z$}\in\bV,\\
	(\mbox{\boldmath $ u $},\mbox{\boldmath $ z $}
	)_{\mbox{\scriptsize \boldmath $ W$}}
	&:=(u,z)_{W} + (u_\Gamma ,z_\Gamma )_{W_\Gamma },
	\qquad&&\mbox{\boldmath $w$},\mbox{\boldmath $z$}\in\b W,
\end{alignat*}
and the respective norms $\norm{\cdot}_{\bsH}$, 
$\norm{\cdot}_{\bsZ}$, $\norm{\cdot}_{\bsV}$, and $\norm{\cdot}_{\bsW}$.
The Hilbert space $\bH$ is identified to its dual through 
the Riesz isomorphism, so that we have the 
continuous and dense embeddings
\[
  \bW\embed \bV \embed \bZ \embed \bH \embed \bV^*,
\]
where the inclusions $\bW\embed \bV\embed \bH$, $\bZ\embed \bH$, and $\bH\embed \bV^*$
are also compact.

We introduce the generalized ``mean'' operator $m:\bV^*\to\erre$ as 
\[
  m(\b z):=\frac{1}{|\Omega|+|\Gamma|}\ip{\b z}{\b 1}_{\bsV^*,\bsV}=
  \frac{1}{|\Omega|+|\Gamma|}\takeshi{{}\bigl({}}\ip{z}{1}_{V^*,V} + \ip{z_\Gamma}{1}_{V_\Gamma^*,V_\Gamma}\takeshi{{}\bigr){}},
   \quad \b z\in \bV^*,
\]
and define the subspace of null-mean elements as
\[
  \bH_0:=\bH\cap\operatorname{ker}(m), \qquad
  \bV_0:=\bV\cap\bH_0, \qquad 
  \takeshi{\bZ_0:=\bZ\cap\bH_0,}
\]
endowed with the norms 
\begin{align*}
  \norm{\b z}_{\bsH_0}&:=\norm{\b z}_{\bsH}, \quad \b z \in \bH,\\
  \norm{\b z}_{\bsV_0}&:=\left(\norm{\nabla z}^2_H + 
  \norm{\nabla_\Gamma z_\Gamma}^2_{H_\Gamma}\right)^{1/2}, \quad \b z \in \bV_0\takeshi{.}
\end{align*}
Let us recall the following \takeshi{Poincar\'e-type} inequalities:
\begin{alignat}{2}
  \label{poin1}
  \exists\,C_p>0:\quad
  &\norm{\b z}_{\bsV}\leq C_p\norm{\b z}_{\bsV_0} \quad&&\forall\,\b z
  =(z,z_\Gamma)\in\bV_0,\\
  \label{poin2}
  \exists\,C_p>0:\quad
  &\norm{z}_{V}\leq C_p\norm{\nabla z}_{H} \quad&&\forall\,\b z=(z,z_\Gamma)\in\bZ_0.
\end{alignat}
For the proof of \eqref{poin1} the reader can refer to \cite[Lem.~A]{CF15},
while the proof of \eqref{poin2} is given in Lemma~\ref{lem:A1} in the Appendix.
These imply in particular 
that an equivalent norm in the space $\bV$ is given by
\beq
  \label{norm1}
  \b z\mapsto 
  \left(\norm{\b z -m(\b z)\b 1}^2_{\bsV_0} + |m(\b z)|^2\right)^{1/2}, \quad \b z\in\bV,
\eeq
while an equivalent norm in $V$ is given by 
\beq
  \label{norm2}
  z\mapsto \left(\norm{\nabla z}^2_{H} + |m(z, z_{|\Gamma})|^2\right)^{1/2}, \quad z\in V.
\eeq
Moreover, we define the linear operator 
\[
  \mathcal L:\bV\to\bV^*, \qquad
  \ip{\mathcal L \b v}{\b z}_{\bsV^*, \bsV}:=
  \int_\Omega\nabla v\cdot\nabla z + 
  \int_\Gamma\nabla_\Gamma v_\Gamma\cdot\nabla_\Gamma z_\Gamma, 
  \quad \b v, \b z\in \bV,
\]
and note that $\mathcal L\b 1=\takeshi{\b0}$ \takeshi{in $\bV^*$}: hence, since $\b V=\b V_0\oplus\text{span}\{\b 1\}$,
we have that the restriction of $\mathcal L$ to $\b V_0$
is injective and with range 
\[
  \b V_{0,*}:=\mathcal L(\b V_0)=\{\b z\in \b V^*:\; m(\b z)=0\}.
\]
Consequently, $\mathcal L:\b V_0 \to \b V_{0,*}$
is a linear isomorphism with well-defined inverse $\mathcal L^{-1}:\bV_{0,*}\to\bV_0$.
With this notation, we introduce the norm 
\[
\takeshi{
  \|\b z\|_*:=\takeshi{\bigl(} \|\mathcal L^{-1}(\b z- m(\b z))\|_{\bsV_0}^2 + |m(\b z)|^2 \takeshi{\bigr)^{1/2}},
  \qquad \b z\in \b V^*,
}
\]
which is equivalent to the usual dual norm on $\b V^*$ and satisfies
\beq\label{chain}
  \langle\partial_t \b z, \mathcal L^{-1}\b z\rangle_{\bsV^*, \bsV}
  =\frac{d}{dt}\frac12\|\b z\|_*^2
  \qquad\forall\,\b z \in H^1(0,T; \b V_{0,*}).
\eeq

\subsection{Concepts of solution}
Let us precise here the concepts of variational (weak) solution
for the system \eqref{eq1}--\eqref{eq2_init}, in the cases $\delta \in (0,1)$
and $\delta=0$,
respectively.

\begin{defin}[$\delta>0$]
  \label{sol:delta_pos}
  Let $\delta>0$, and 
  \[
  \b u_{0}^\delta\in \bV,\qquad \b f^\delta\in L^2(0,T; \bH).
  \]
  A weak solution to the problem \eqref{eq1}--\eqref{eq2_init} is a triplet
  $(\b u^\delta, \b\mu^\delta,\b\xi^\delta)$, with
  \begin{align*}
  &\b u^\delta \in H^1(0,T; \bV^*)\cap L^\infty(0,T; \bV)\cap L^2(0,T;\b W),\\
  &\b\mu^\delta \in L^2(0,T; \bV),\\
  &\b\xi^\delta\in L^2(0,T;\bH),
  \end{align*}
  such that $\b u^\delta(0)=\b u_0^\delta$,
  \begin{align}
  \label{var1_delta_pos}
  \langle\partial_t\b u^\delta,\b z\rangle_{\bsV^*,\bsV}
  +\int_\Omega\nabla\mu^\delta\cdot\nabla z
  +\int_\Gamma\nabla_\Gamma\mu_\Gamma^\delta\cdot\nabla_\Gamma z_\Gamma=0
  \qquad&\forall\,\b z\in\bV,\\
  \label{var2_delta_pos}
  (\b\mu^\delta,\b z)_{\bsH}=
  \int_\Omega\nabla u^\delta\cdot\nabla z
  +\delta\int_\Gamma\nabla_\Gamma u_\Gamma^\delta\cdot\nabla_\Gamma z_\Gamma
  +(\b\xi^\delta+\b\pi(\b u^\delta) - \b f^\delta, \b z)_{\bsH}
  \qquad&\forall\,\b z\in\bV,
  \end{align}
  almost everywhere in $(0,T)$, and
  \begin{align}
  \label{incl1_delta_pos}
  \xi^\delta\in\beta(u^\delta)\qquad&\text{a.e.~in } Q,\\
  \label{incl2_delta_pos}
  \xi_\Gamma^\delta\in\beta_\Gamma(u_\Gamma^\delta)\qquad&\text{a.e.~\takeshi{on} } \Sigma.
  \end{align}
\end{defin}

\begin{defin}[$\delta=0$]
  \label{sol:delta0}
  Let $\delta=0$, and
  \[
  \b u_{0}\in \bZ,\qquad \b f\in L^2(0,T; \bH).
  \]
  A weak solution to the problem \eqref{eq1}--\eqref{eq2_init} is a triplet
  $(\b u, \b\mu,\b\xi)$, with
  \begin{align*}
  &\b u \in H^1(0,T; \bV^*)\cap L^\infty(0,T; \bZ),\qquad \Delta u\in L^2(0,T; H)\\
  &\b\mu \in L^2(0,T; \bV),\\
  &\b\xi\in L^2(0,T;H\times Z_\Gamma^*),\\
  \end{align*}
  such that $\b u(0)=\b u_0$, 
  \begin{align}
  \label{var1_delta0}
  \langle\partial_t\b u,\b z\rangle_{\bsV^*,\bsV}
  +\int_\Omega\nabla\mu\cdot\nabla z
  +\int_\Gamma\nabla_\Gamma\mu_\Gamma\cdot\nabla_\Gamma z_\Gamma=0
  \qquad&\forall\,\b z\in\bV,\\
  \label{var2_delta0}
  (\b\mu,\b z)_{\bsH}=
  \int_\Omega\nabla u\cdot\nabla z
  +(\xi,z)_H+\langle\xi_\Gamma,z_\Gamma\rangle_{Z_\Gamma^*,Z_\Gamma}
  +(\b\pi(\b u) - \b f, \b z)_{\bsH}
  \qquad&\forall\,\b z\in\bZ,
  \end{align}
  almost everywhere in $(0,T)$, and 
  \begin{align}
  \label{incl1_delta0}
  &\xi\in\beta(u)\qquad\text{a.e.~in } Q,\\
  \label{incl2_delta0}
  &\int_\Sigma\widehat\beta_\Gamma(u_\Gamma)+
  \int_0^T
  \langle\xi_\Gamma,z_\Gamma-u_\Gamma\rangle_{Z_\Gamma^*,Z_\Gamma}\leq
  \int_\Sigma\widehat\beta_\Gamma(z_\Gamma) 
  \quad\forall\,z_\Gamma\in L^2(0,T; Z_\Gamma),
  \end{align}
  where the last integral is intended to be $+\infty$ whenever 
  $\widehat\beta_\Gamma(z_\Gamma)\notin L^1(\Sigma)$.
\end{defin}

\begin{remark}\rm
  Let us point out that the variational equalities \eqref{var1_delta_pos} and 
  \eqref{var1_delta0} can be formally obtained from the 
  equations \eqref{eq1} and \eqref{eq1_bound} multiplying by 
  the generic pair $(z,z_\Gamma)\in\b V$ and integrating by parts.
  As a matter of fact, 
  the equations \eqref{var1_delta_pos} and 
  \eqref{var1_delta0} actually 
  provide a representation of the time derivative $\partial_t\b u^\delta$
  and $\partial_t\b u$ as elements of the dual space $L^2(0,T; \b V^*)$.
\end{remark}

\begin{remark}\rm
\label{rem1}
We note that the variational equation \eqref{var2_delta_pos}
entails 
\begin{align}
  \label{pier1}
  \mu^\delta =  -\Delta u^\delta + \xi^\delta + \pi(u^\delta) - f^\delta
  \qquad&\text{a.e.~in } Q,\\
  \label{pier2}
  \mu_\Gamma^\delta = \partial_\nu u^\delta - \delta\Delta_\Gamma u_\Gamma^\delta
  +\xi^\delta_\Gamma + \pi_{\takeshi{\Gamma}}(u^\delta_\Gamma) - f^\delta_\Gamma
  \qquad&\text{a.e~on } \Sigma.
\end{align}
Indeed, \eqref{pier1} follows testing \eqref{var2_delta_pos} by
the generic pair $(z,0)$ with $z\in H^1_0(\Omega)$, integrating by parts, 
and using the regularity of $u^\delta$. Then, due to the regularity of 
$\partial_\nu u^\delta$ and
$u^\delta_\Gamma$, the boundary condition \eqref{pier2} can be easily 
derived from \eqref{var2_delta_pos} using \eqref{pier1}.
\end{remark}

\begin{remark}\rm
In the same spirit, one can argue on \eqref{var2_delta0}
to deduce 
\begin{align}
  \label{pier3}
  \mu =  -\Delta u + \xi + \pi(u) - f
  \qquad&\text{a.e.~in } Q,\\
  \label{pier4}
  \mu_\Gamma = \partial_\nu u 
  +\xi_\Gamma + \pi_{\takeshi{\Gamma}}(u_\Gamma) - f_\Gamma
  \qquad&\text{in } Z_\Gamma^*,\text{ a.e.~in } (0,T).
\end{align}
Indeed, \eqref{pier3} follows as above 
testing  \eqref{var2_delta0} by $(z,0)$ with $z\in H^1_0(\Omega)$,
using integration by parts,
and the regularity of $\Delta u$. As for \eqref{pier4}, 
since for almost every $t\in(0,T)$ it holds that $u(t)\in V$
and $\Delta u(t)\in H$, by \cite[Thm.~2.27, p.~1.64]{BG87}
we have that $\partial_\nu u(t)$ is well-defined in 
$Z_\Gamma^*\cong H^{-1/2}(\Gamma)$. Hence,
\eqref{pier4} can be deduced from \eqref{var2_delta0}
using \eqref{pier3}.
\end{remark}

\begin{remark}\rm
  \label{rem3}
  Let us comment on condition \eqref{incl2_delta0}.
  Whenever $\xi_\Gamma \in L^2(0,T; H_\Gamma)$, it turns out that
  \eqref{incl2_delta0} is actually equivalent to the classical inclusion
  \[
  \xi_\Gamma \in \beta_\Gamma(u_\Gamma) \qquad\text{a.e.~on } \Sigma,
  \]
  or equivalently
  \[
  \xi_\Gamma \in \partial I_\Sigma(u_\Gamma),
  \]
  where
  \[
  I_\Sigma:L^2(0,T; H_\Gamma)\to[0,+\infty],
  \qquad
  I_\Sigma(z_\Gamma):=
  \begin{cases}
  \int_\Sigma\widehat\beta_\Gamma(z_\Gamma) \quad&\text{if } 
  \widehat\beta_\Gamma(z_\Gamma) \in L^1(\Sigma),\\
  +\infty\quad&\text{otherwise}.
  \end{cases}
  \]  
  More generally, if we only have $\xi_\Gamma \in L^2(0,T; Z_\Gamma^*)$, then
  \eqref{incl2_delta0} means that 
  \[
  \xi_\Gamma \in \partial J_\Sigma(u_\Gamma),
  \]
  where
  \[
  J_\Sigma:L^2(0,T; Z_\Gamma)\to[0,+\infty],
  \qquad
  J_\Sigma(z_\Gamma):=
  \begin{cases}
  \int_\Sigma\widehat\beta_\Gamma(z_\Gamma) \quad&\text{if } 
  \widehat\beta_\Gamma(z_\Gamma) \in L^1(\Sigma),\\
  +\infty\quad&\text{otherwise}.
  \end{cases}
  \]
  Here, the main point is that,
  since we are identifying $H_\Gamma$ to its dual,
  the subdifferential $\partial I_\Sigma$
  is intended as a multivalued operator 
  \[
  \partial I_\Sigma:L^2(0,T; H_\Gamma) \to 2^{L^2(0,T; H_\Gamma)},
  \]
  while $\partial J_\Sigma$ is seen as an operator 
  \[
  \partial J_\Sigma:L^2(0,T; Z_\Gamma) \to 2^{L^2(0,T; Z^*_\Gamma)}.
  \]
  For further details we refer to \cite{Bar10, Bre73}.
\end{remark}

\subsection{Main results}
Let us recall the well-posedness result for the system \eqref{eq1}--\eqref{eq2_init}
when $\delta>0$ is fixed: the reader can refer to \cite[Thm.~2.1--2.2]{CF15}.
\begin{thm}
  \label{thm:WP}
  Assume {\bf A1--A3}, let $\delta>0$ be fixed, and suppose that
  \begin{align}
  \label{ip1_delta_pos}
   &\b u_{0}^\delta\in \bV, \qquad
   \widehat\beta(u_0^\delta)\in L^1(\Omega), \qquad
   \widehat\beta_\Gamma(u_{0\Gamma}^\delta)\in L^1(\Gamma), \qquad
   m(\b u_0^\delta) \in \operatorname{Int}D(\beta_\Gamma),\\
   \label{ip2_delta_pos}
   &\b f^\delta:=\b g^\delta + \b h^\delta, \qquad
   \b g^\delta\in W^{1,1}(0,T; \bH), \qquad
   \b h^\delta\in L^2(0,T; \bV).
  \end{align}
  Then, there exists a weak solution $(\b u^\delta, \b\mu^\delta, \b\xi^\delta)$
  of the system \eqref{eq1}--\eqref{eq2_init}, in the sense of Definition~\ref{sol:delta_pos}.
  Moreover, there exists a constant $K_\delta>0$ such that, 
  for any data $\{(\b u^\delta_{0,i}, \b f^\delta_i)\}_{i=1,2}$ satisfying 
  \eqref{ip1_delta_pos}--\eqref{ip2_delta_pos} and 
  $m(\b u^\delta_{0,1})=m(\b u^\delta_{0,1})$, any respective weak solutions 
  $\{(\b u^\delta_i, \b\mu^\delta_i, \b\xi^\delta_i)\}_{i=1,2}$ satisfy
  \[
  \|\b u^\delta_1-\b u^\delta_2\|_{L^\infty(0,T;\bsV^*)\cap L^2(0,T; \bsV)}
  \leq K_\delta\left(\|\b u_{0,1}^\delta - \b u_{0,2}^\delta\|_{\bsV^*} + 
  \|\b f^\delta_1-\b f^\delta_2\|_{L^2(0,T; \bsV^*)}\right).
  \]
  In particular, the solution components $\b u^\delta$ and $\b\mu^\delta-\b\xi^\delta$
  are unique. If also $\beta$ is single-valued then the whole
  triplet $(\b u^\delta, \b\mu^\delta, \b\xi^\delta)$ is unique as well.
\end{thm}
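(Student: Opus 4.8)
This is essentially the content of \cite[Thms.~2.1--2.2]{CF15}, and the plan I would follow reproduces the scheme of that paper. The first step is to regularise the two maximal monotone graphs by a \emph{compatible} family of Moreau--Yosida approximations $\beta_\lambda:=\partial\widehat\beta_\lambda$, $\beta_{\Gamma,\lambda}:=\partial\widehat\beta_{\Gamma,\lambda}$, $\lambda\in(0,1)$, built (as in \cite{CC13,GMS11}) so that the domination \eqref{dom_beta} is inherited at the approximate level, namely $|\beta_\lambda(r)|\le C(1+|\beta_{\Gamma,\lambda}(r)|)$ with $C$ independent of $\lambda$; if needed one also smooths $\b u_0^\delta$ keeping the mean fixed and $\widehat\beta(u_0^\delta),\widehat\beta_\Gamma(u_{0\Gamma}^\delta)$ bounded in $L^1$. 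Since the nonlinearities are now Lipschitz, existence for the $\lambda$-problem is obtained by a Faedo--Galerkin discretisation along a basis of $\bV$ adapted to $\mathcal L$: one solves the resulting finite-dimensional system (tracking separately the conserved mean $m(\b u)\equiv m(\b u_0^\delta)$, which follows by testing the analogue of \eqref{var1_delta_pos} with $\b 1$), and extends the local solution to $[0,T]$ by means of the uniform estimates below.

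The a priori bounds come in the usual order. The energy estimate is obtained by testing the approximation of \eqref{var2_delta_pos} with $\partial_t\b u$ and that of \eqref{var1_delta_pos} with $\b\mu$ and summing: after absorbing the $\b\pi$- and $\b f$-terms by Young's inequality and Gronwall, this controls $\tfrac12\norm{\nabla u}_H^2+\tfrac\delta2\norm{\nabla_\Gamma u_\Gamma}_{H_\Gamma}^2+\int_\Omega\widehat\beta_\lambda(u)+\int_\Gamma\widehat\beta_{\Gamma,\lambda}(u_\Gamma)$ and $\int_0^t\bigl(\norm{\nabla\mu}_H^2+\norm{\nabla_\Gamma\mu_\Gamma}_{H_\Gamma}^2\bigr)$. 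To upgrade this to a bound for $\b\mu$ in $L^2(0,T;\bV)$ one must control $m(\b\mu)$: using $m(\b u_0^\delta)\in\operatorname{Int}D(\beta_\Gamma)$ one tests \eqref{var2_delta_pos} by $\b u-m(\b u_0^\delta)\b1$ and invokes the interior-point inequality $\xi_{\Gamma,\lambda}\bigl(u_\Gamma-m(\b u_0^\delta)\bigr)\ge r_0|\xi_{\Gamma,\lambda}|-C$ to bound $\xi_{\Gamma,\lambda}$ in $L^1(\Sigma)$, hence $\xi_\lambda$ in $L^1(Q)$ through the compatible domination, hence $m(\b\mu)$ in $L^2(0,T)$ and, via \eqref{norm1}, $\b\mu$ in $L^2(0,T;\bV)$. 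A further test of \eqref{var2_delta_pos} by $-\Delta u$ and by $\b\xi_\lambda$, exploiting the monotonicity of $\beta_\lambda,\beta_{\Gamma,\lambda}$ together with \eqref{dom_beta} to absorb the boundary cross-terms, yields $\b u\in L^2(0,T;\b W)$ and $\b\xi_\lambda\in L^2(0,T;\bH)$ uniformly in $\lambda$; finally $\partial_t\b u\in L^2(0,T;\bV^*)$ is read off from \eqref{var1_delta_pos}.

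Passing to the limit $\lambda\searrow0$ (and along the Galerkin index), the bounds in $L^\infty(0,T;\bV)\cap H^1(0,T;\bV^*)$ give, by Aubin--Lions and the compactness $\bV\cembed\bH$, strong convergence $\b u_\lambda\to\b u$ in $C^0([0,T];\bH)\cap L^2(0,T;\bH)$, while $\b u_\lambda\wto\b u$ in $L^2(0,T;\b W)$, $\b\mu_\lambda\wto\b\mu$ in $L^2(0,T;\bV)$, $\b\xi_\lambda\wto\b\xi$ in $L^2(0,T;\bH)$. The Lipschitz term converges by strong convergence of $\b u_\lambda$, the linear terms trivially, and the initial condition survives by continuity in $\bH$; the inclusions $\xi\in\beta(u)$ a.e.\ in $Q$ and $\xi_\Gamma\in\beta_\Gamma(u_\Gamma)$ a.e.\ on $\Sigma$ follow from the strong--weak closedness of maximal monotone operators, once one checks $\int_0^T(\b\xi_\lambda,\b u_\lambda)_{\bsH}\to\int_0^T(\b\xi,\b u)_{\bsH}$.

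For the continuous dependence, take two solutions with data as in \eqref{ip1_delta_pos}--\eqref{ip2_delta_pos} and equal means of the initial data, so the difference $\b u:=\b u_1^\delta-\b u_2^\delta$ satisfies $m(\b u(t))\equiv0$. Applying the chain rule \eqref{chain} with $\mathcal L^{-1}\b u$ in the difference of \eqref{var1_delta_pos}, and testing the difference of \eqref{var2_delta_pos} by $\b u$, one gets $\tfrac{\d}{\d t}\tfrac12\norm{\b u}_*^2+\norm{\nabla u}_H^2+\delta\norm{\nabla_\Gamma u_\Gamma}_{H_\Gamma}^2\le\bigl(\b\pi(\b u_2^\delta)-\b\pi(\b u_1^\delta)+\b f_1^\delta-\b f_2^\delta,\b u\bigr)_{\bsH}$, the $\b\xi$-contribution being nonnegative by monotonicity; bounding the right-hand side by the Lipschitz property of $\b\pi$, an Ehrling inequality $\norm{\b u}_{\bsH}^2\le\eps\norm{\b u}_{\bsV}^2+C_\eps\norm{\b u}_*^2$ and the Poincar\'e inequality \eqref{poin1} (whose constant is where the $\delta$-dependence of $K_\delta$ enters), and then Gronwall, gives the stated estimate. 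In particular $\b u^\delta$ is unique, and then $\b\mu^\delta-\b\xi^\delta$ is unique since it equals $-\Delta u^\delta+\pi(u^\delta)-f^\delta$ in $Q$ and $\partial_\nu u^\delta-\delta\Delta_\Gamma u_\Gamma^\delta+\pi_\Gamma(u_\Gamma^\delta)-f_\Gamma^\delta$ on $\Sigma$; if $\beta$ is single-valued then $\xi^\delta$, and hence $\mu^\delta$, are determined in $Q$, so its trace $\mu_\Gamma^\delta$ and therefore $\xi_\Gamma^\delta$ on $\Sigma$ are determined as well, yielding uniqueness of the whole triplet. The genuinely delicate points are the construction of the $\lambda$-approximation preserving \eqref{dom_beta} and the two ``hard'' estimates --- the $L^1(\Sigma)$-control of $\xi_{\Gamma,\lambda}$ through the interior-point condition and the elliptic-regularity estimate giving $\b u\in L^2(0,T;\b W)$ --- where the domination {\bf A2} is used precisely to absorb the bulk/boundary cross-terms that would otherwise be uncontrolled.
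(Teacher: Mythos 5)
Your proposal is correct and follows essentially the same route as the paper, which does not prove Theorem~\ref{thm:WP} at all but simply quotes it from \cite{CF15}: your sketch (compatible Yosida regularisation preserving {\bf A2}, Galerkin/approximation, energy and $L^1$--$L^2$ estimates on $\b\xi$ via the interior-point inequality, passage to the limit by maximal monotonicity, and continuous dependence through $\mathcal L^{-1}$ and the chain rule \eqref{chain}) is a faithful outline of that reference's argument. The concluding uniqueness remarks, including that single-valuedness of $\beta$ alone suffices to determine $\xi_\Gamma^\delta$ by comparison in the boundary equation, are also consistent with the statement.
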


We are now ready to state our main results.
\begin{thm}
  \label{thm:conv}
  Assume {\bf A1--A3} and let 
  \begin{align}
  \label{ip1_delta0}
    &\b u_0 \in\bZ, \qquad
    \widehat\beta(u_0)\in L^1(\Omega), \qquad
   \widehat\beta_\Gamma(u_{0\Gamma})\in L^1(\Gamma), \qquad
   m(\b u_0) \in \operatorname{Int}D(\beta_\Gamma),\\
   \label{ip2_delta0}
   &\b f :=\b g + \b h, \qquad
   \b g\in W^{1,1}(0,T; \bH), \qquad
   \b h\in L^2(0,T; \bV).
  \end{align}
  Consider 
  a family of data $\{(\b u_{0}^\delta, \b f^\delta)\}_{\delta\in(0,1)}$ which 
  satisfy assumptions~\eqref{ip1_delta_pos}--\eqref{ip2_delta_pos}
  and denote by $\{(\b u^\delta, \b \mu^\delta, \b\xi^\delta)\}_{\delta\in(0,1)}$ 
  the 
  respective weak solutions of the system \eqref{eq1}--\eqref{eq2_init}
  given by Theorem~\ref{sol:delta_pos}. Suppose also that 
  there exists a constant $M_0>0$ such that 
  \begin{align}
  \label{ip1}
  \delta\|\nabla_\Gamma u_{0\Gamma}^\delta\|_{H_\Gamma}^2
  +\big\|\widehat\beta(u_0^\delta)\big\|_{L^1(\Omega)}
   +\big\|\widehat\beta_\Gamma(u_{0\Gamma}^\delta)\big\|_{L^1(\Gamma)}
  \leq M_0
  \qquad&\forall\,\delta\in(0,1),\\
  \label{ip2}
  \|\b g^\delta\|_{W^{1,1}(0,T; \bsH)} +
  \|\b h^\delta\|_{L^2(0,T; \bsV)}
  \leq M_0
  \qquad&\forall\,\delta\in(0,1),
  \end{align}
  and that, as $\delta\to0$,
  \beq\label{ip3}
  \b u_0^\delta \wto \b u_0 \quad\text{in } \bZ, \qquad
  \b f^\delta \wto \b f \quad\text{in } L^2(0,T;\bH).
  \eeq
  Then, there exists a weak solution 
  $(\b u, \b\mu, \b\xi)$
  of the system \eqref{eq1}--\eqref{eq2_init} with $\delta=0$
  in the sense of Definition~\ref{sol:delta0}, such that,
  as $\delta\to0$, 
  \begin{align}
  \label{c1}
  \b u^\delta \to \b u \quad&\text{in } C^0([0,T]; \bH),\\
  \b u^\delta \wstarto \b u \quad&\text{in } H^1(0,T; \bV^*)\cap L^\infty(0,T;\bZ),\\
  \Delta u^\delta \wto \Delta u \quad&\text{in } L^2(0,T; H),\\
  \b \mu^\delta \wto \b \mu \quad&\text{in }  L^2(0,T;\bV),\\
  \b \xi^\delta \wto \b \xi \quad&\text{in } L^2(0,T; H\times V_\Gamma^*),\\
  -\delta\Delta_\Gamma u_\Gamma^\delta + \xi_\Gamma^\delta \wto\xi_\Gamma
  \quad&\text{in } L^2(0,T; Z_\Gamma^*),\\
  \delta \b u^\delta \to \takeshi{\b0} \quad&\text{in } L^\infty(0,T; \bV).
  \label{c7}
  \end{align}
  Moreover, there exists a constant $K>0$ such that, 
  for any data $\{(\b u_{0,i}, \b f_i)\}_{i=1,2}$ satisfying 
  \eqref{ip1_delta0}--\eqref{ip2_delta0} and 
  $m(\b u_{0,1})=m(\b u_{0,1})$, any respective weak solutions 
  $\{(\b u_i, \b\mu_i, \b\xi_i)\}_{i=1,2}$ 
  of \eqref{eq1}--\eqref{eq2_init} with $\delta=0$
  in the sense of Definition~\ref{sol:delta0}
  satisfy
  \[
  \|\b u_1-\b u_2\|_{L^\infty(0,T;\bsV^*)\cap L^2(0,T; \bsZ)}
  \leq K\left(\|\b u_{0,1} - \b u_{0,2}\|_{\bsV^*} + 
  \|\b f_1-\b f_2\|_{L^2(0,T; \bsH)}\right).
  \]
  In particular, the solution components $\b u$ and $\b\mu-\b\xi$
  are unique. If also $\beta$ is single-valued then the whole
  triplet $(\b u, \b\mu, \b\xi)$ is unique as well.
\end{thm}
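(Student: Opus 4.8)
\noindent\textbf{Plan of proof of Theorem~\ref{thm:conv}.}
The argument has three parts: (a) uniform‑in‑$\delta$ a priori bounds, followed by a compactness argument that simultaneously produces a weak solution of the problem with $\delta=0$ and establishes the convergences \eqref{c1}--\eqref{c7} (along a subsequence, which for the components $\b u$ and $\b\mu-\b\xi$ becomes the whole family by part~(c)); (b) the continuous dependence estimate; (c) uniqueness, an immediate corollary of (b). One structural fact is used throughout: choosing $\b z=\b 1$ in \eqref{var1_delta_pos} (resp.\ in \eqref{var1_delta0}) gives conservation of the total mass, $m(\b u^\delta(t))=m(\b u_0^\delta)$ (resp.\ $m(\b u(t))=m(\b u_0)$), so by \eqref{ip3} these means lie, for $\delta$ small, in a fixed compact subset of $\operatorname{Int}D(\beta_\Gamma)\subseteq\operatorname{Int}D(\beta)$; moreover $\partial_t\b u^\delta,\partial_t\b u\in\b V_{0,*}$ a.e., which makes the chain rule \eqref{chain} available.

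For part (a), I would first perform the energy estimate, testing \eqref{var1_delta_pos} by $\b\mu^\delta$ and using \eqref{pier1}--\eqref{pier2} and the convex chain rule to recognise $\langle\partial_t\b u^\delta,\b\mu^\delta\rangle_{\bsV^*,\bsV}$ as the time‑derivative of the free energy $\tfrac12\|\nabla u^\delta\|_H^2+\tfrac{\delta}{2}\|\nabla_\Gamma u^\delta_\Gamma\|_{H_\Gamma}^2+\int_\Omega(\widehat\beta+\widehat\pi)(u^\delta)+\int_\Gamma(\widehat\beta_\Gamma+\widehat\pi_\Gamma)(u^\delta_\Gamma)$ up to the source term, which is handled via the splitting $\b f^\delta=\b g^\delta+\b h^\delta$ ($\b g^\delta$ integrated by parts in time, $\b h^\delta$ paired against $\nabla\b\mu^\delta$ and partly reabsorbed). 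By \eqref{ip1}--\eqref{ip2} this gives, uniformly in $\delta\in(0,1)$, $\b u^\delta$ bounded in $L^\infty(0,T;\bZ)$, $\delta^{1/2}\nabla_\Gamma u^\delta_\Gamma$ bounded in $L^\infty(0,T;H_\Gamma)$ (hence $\delta\b u^\delta\to\b 0$ in $L^\infty(0,T;\bV)$), $\widehat\beta(u^\delta),\widehat\beta_\Gamma(u^\delta_\Gamma)$ bounded in $L^\infty(0,T;L^1)$, and $\nabla\b\mu^\delta$ bounded in $L^2$. Testing next \eqref{var2_delta_pos} by $\b u^\delta-m(\b u_0^\delta)\b 1$ and exploiting $m(\b u_0^\delta)\in\operatorname{Int}D(\beta_\Gamma)$ together with \textbf{A2} (through the standard inequality $\xi_\Gamma(r-r_0)\ge c_0|\xi_\Gamma|-c_1$ for the boundary graph, and its analogue for $\beta$) yields uniform $L^1$ bounds for $\xi^\delta$ and $\xi^\delta_\Gamma$, hence a bound for $m(\b\mu^\delta)$ and, via \eqref{poin1} and \eqref{norm1}, $\b\mu^\delta$ bounded in $L^2(0,T;\bV)$. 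The crucial and most delicate estimate is the elliptic one: reading \eqref{var2_delta_pos} as a monotone elliptic system, working on the Yosida regularisation of $(\beta,\beta_\Gamma)$ to justify the computation and then passing to that limit, I would test the bulk equation by $\xi^\delta$ and use \textbf{A2} to reabsorb the boundary contributions, obtaining $\xi^\delta$ bounded in $L^2(Q)$ and, through \eqref{pier1}, $\Delta u^\delta$ bounded in $L^2(0,T;H)$; then from \eqref{pier2}, the trace bound $\|\partial_\nu u^\delta\|_{Z_\Gamma^*}\le C(\|u^\delta\|_V+\|\Delta u^\delta\|_H)$, and $\delta\Delta_\Gamma u^\delta_\Gamma\to0$ in $L^\infty(0,T;V_\Gamma^*)$, one gets $-\delta\Delta_\Gamma u^\delta_\Gamma+\xi^\delta_\Gamma$ bounded in $L^2(0,T;Z_\Gamma^*)$ and $\xi^\delta_\Gamma$ bounded in $L^2(0,T;V_\Gamma^*)$. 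I expect this to be the main obstacle, since no boundary diffusion is available and \textbf{A2} must do all the work. With these bounds in hand, Aubin--Lions gives $\b u^\delta\to\b u$ in $C^0([0,T];\bH)$, and the other bounds furnish the remaining weak/weak‑$*$ limits in \eqref{c1}--\eqref{c7}.

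To identify the limit, I pass to the limit in \eqref{var1_delta_pos} and in \eqref{var2_delta_pos}, the latter first for $\b z\in\b V$ (using $\delta\nabla_\Gamma u^\delta_\Gamma\to0$ and $\xi^\delta_\Gamma\wto\xi_\Gamma$ in $L^2(0,T;V_\Gamma^*)$) and then for $\b z\in\bZ$ by density of $\b V$ (indeed of $\b W$) in $\bZ$ and continuity of all terms, obtaining \eqref{var1_delta0}--\eqref{var2_delta0}, and in particular $\Delta u\in L^2(0,T;H)$; the inclusion $\xi\in\beta(u)$ a.e.\ in $Q$ then follows from the strong convergence $u^\delta\to u$ in $L^2(Q)$ and the strong--weak closedness of the maximal monotone operator induced by $\beta$ on $L^2(Q)$. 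For the boundary inclusion \eqref{incl2_delta0} I would test \eqref{var2_delta_pos} by $\b u^\delta$, integrate over $(0,T)$, and take $\limsup_{\delta\to0}$: using $\int_0^T(\b\mu^\delta,\b u^\delta)_{\bsH}\to\int_0^T(\b\mu,\b u)_{\bsH}$ and $\int_Q\xi^\delta u^\delta\to\int_Q\xi u$ (the latter since $\xi\in\beta(u)$ is already known), the weak lower semicontinuity of $v\mapsto\int_Q|\nabla v|^2$, the non‑negativity of $\delta\int_\Sigma|\nabla_\Gamma u^\delta_\Gamma|^2$, and the strong convergence of $\b u^\delta$ and $\b\pi(\b u^\delta)$, and then comparing with \eqref{var2_delta0} tested by $\b u\in\bZ$, one gets $\limsup_{\delta\to0}\int_\Sigma\xi^\delta_\Gamma u^\delta_\Gamma\le\int_0^T\langle\xi_\Gamma,u_\Gamma\rangle_{Z_\Gamma^*,Z_\Gamma}$. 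Plugging this into the $\liminf$ of the $\delta$‑level subgradient inequality $\int_\Sigma\widehat\beta_\Gamma(u^\delta_\Gamma)+\int_0^T\langle\xi^\delta_\Gamma,w_\Gamma-u^\delta_\Gamma\rangle\le\int_\Sigma\widehat\beta_\Gamma(w_\Gamma)$ --- valid for $w_\Gamma\in L^2(0,T;V_\Gamma)$ with $\widehat\beta_\Gamma(w_\Gamma)\in L^1(\Sigma)$, and using lower semicontinuity of $z\mapsto\int_\Sigma\widehat\beta_\Gamma(z)$ together with $u^\delta_\Gamma\to u_\Gamma$ in $L^1(\Sigma)$ --- yields \eqref{incl2_delta0} for such $w_\Gamma$, and a standard density/approximation argument extends it to all $z_\Gamma\in L^2(0,T;Z_\Gamma)$.

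For parts (b)--(c), let $(\b u_i,\b\mu_i,\b\xi_i)$, $i=1,2$, be two $\delta=0$ solutions with the stated data, and set $\b u:=\b u_1-\b u_2$, $\b\mu:=\b\mu_1-\b\mu_2$, $\b\xi:=\b\xi_1-\b\xi_2$, $\b f:=\b f_1-\b f_2$. Mass conservation and $m(\b u_{0,1})=m(\b u_{0,2})$ give $\b u(t)\in\bZ_0$ a.e.\ and $\b u\in H^1(0,T;\b V_{0,*})$. Testing the difference of \eqref{var1_delta0} by $\mathcal L^{-1}\b u$ and using \eqref{chain} and the symmetry of $\mathcal L$ gives $\tfrac{d}{dt}\tfrac12\|\b u\|_*^2+(\b u,\b\mu)_{\bsH}=0$; testing the difference of \eqref{var2_delta0} by $\b u(t)\in\bZ_0$, integrating over $(0,t)$, and using the monotonicity of $\beta$ (so $\int_{Q_t}\xi u\ge0$), the monotonicity of the subdifferential $\partial J_\Sigma$ of Remark~\ref{rem3} (so $\int_0^t\langle\xi_\Gamma,u_\Gamma\rangle_{Z_\Gamma^*,Z_\Gamma}\,ds\ge0$), and the Lipschitz continuity of $\b\pi$, one is led to
\[
  \tfrac12\|\b u(t)\|_*^2+\int_0^t\|\nabla u\|_H^2\,ds\le\tfrac12\|\b u(0)\|_*^2+C\int_0^t\|\b u\|_{\bsH}^2\,ds+\tfrac12\|\b f\|_{L^2(0,t;\bsH)}^2 .
\]
The decisive ingredient to close this is the interpolation inequality $\|\b z\|_{\bsH}^2\le\eta\,\|\nabla z\|_H^2+C_\eta\|\b z\|_*^2$, valid for every $\b z\in\bZ_0$ and every $\eta>0$, which follows from the Poincar\'e inequality \eqref{poin2}, the trace estimate $\|z_{|\Gamma}\|_{Z_\Gamma}\le C\|z\|_V$, the equivalence of $\|\cdot\|_*$ with the dual norm of $\b V^*$, and the compact embedding $\bZ_0\embed\bH$ --- here it is essential that only the bulk gradient is under control. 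Choosing $\eta$ small to absorb $\int_0^t\|\nabla u\|_H^2$ and applying Gronwall's lemma gives the bound on $\|\b u\|_{L^\infty(0,T;\bsV^*)}$; then \eqref{poin2} promotes the absorbed term to the bound on $\|\b u\|_{L^2(0,T;\bsZ)}$, which is the asserted continuous dependence estimate. Finally, with equal data this forces $\b u_1=\b u_2$, and then the difference of \eqref{var2_delta0}, tested first by $(z,0)$ with $z\in H^1_0(\Omega)$ and then by a general $\b z\in\bZ$, shows that $\b\mu-\b\xi$ (that is, the pair $(\mu-\xi,\mu_\Gamma-\xi_\Gamma)$) is unique; if in addition $\beta$ is single‑valued, then $\xi_i=\beta(u_i)$ is determined, hence so is $\b\mu_i$, and by \eqref{pier4} so is $\xi_{\Gamma,i}$, giving uniqueness of the whole triplet. $\square$
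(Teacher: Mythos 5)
Your overall strategy (a priori bounds, Yosida-based $L^2$ bound on $\xi^\delta$ via \textbf{A2}, Aubin--Lions compactness, identification of the boundary inclusion through a $\limsup$ argument and an elliptic regularisation of the boundary test functions, continuous dependence via $\mathcal L^{-1}$, Ehrling and Gronwall) coincides with the paper's. There is, however, a genuine gap at your very first step: you open with the energy estimate (testing \eqref{var1_delta_pos} by $\b\mu^\delta$ and \eqref{var2_delta_pos} by $-\partial_t\b u^\delta$) and claim that it alone already yields the $L^\infty(0,T;\bZ)$ bound on $\b u^\delta$ and the $L^2$ bounds on $\nabla\mu^\delta$, $\nabla_\Gamma\mu^\delta_\Gamma$. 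Under \textbf{A1}--\textbf{A3} this estimate does not close by itself: the free energy contains $\int_\Omega\widehat\pi(u^\delta(t))+\int_\Gamma\widehat\pi_\Gamma(u^\delta_\Gamma(t))$, and since $\widehat\beta,\widehat\beta_\Gamma\geq0$ need not dominate these quadratic contributions (e.g.\ $\beta\equiv0$ and $\pi(r)=-Lr$ with $L$ large is admissible), moving them to the right-hand side leaves a term $C\|\b u^\delta(t)\|_{\bsH}^2$ evaluated at the endpoint time $t$, with $C$ proportional to the Lipschitz constants of $\pi,\pi_\Gamma$. This cannot be absorbed into $\tfrac12\|\nabla u^\delta(t)\|_H^2$ (the Poincar\'e and trace constants are fixed, not small), and it is not under a time integral, so Gronwall does not apply to it either. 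The missing ingredient is precisely the paper's preliminary lower-order estimate: testing \eqref{var1_delta_pos} by $\mathcal L^{-1}(\b u^\delta-m(\b u_0^\delta)\b 1)$ and \eqref{var2_delta_pos} by $-(\b u^\delta-m(\b u_0^\delta)\b 1)$, with the Miranville--Zelik inequality for the $\xi$-terms, which gives $\|\b u^\delta\|_{L^\infty(0,T;\bsV^*)\cap L^2(0,T;\bsZ)}\leq C$ \emph{before} any information on $\b\mu^\delta$ is available (the $\mu$-terms cancel thanks to the $\mathcal L^{-1}$ test function). Only with this bound in hand can one invoke the Ehrling inequality \eqref{ehrling} to write $\|\b u^\delta(t)\|_{\bsH}^2\leq\eps\|u^\delta(t)\|_V^2+C_\eps$ and close the energy estimate.

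In short, the two estimates in your part (a) are essentially the right ones but in the wrong order, and your second step cannot be promoted to first place in the form you give it, since testing \eqref{var2_delta_pos} alone by $\b u^\delta-m(\b u_0^\delta)\b 1$ requires the gradient bound on $\b\mu^\delta$ that you were trying to produce; the coupled test with $\mathcal L^{-1}$ is what removes this circularity. Note also that the interpolation inequality you introduce in part (b), namely $\|\b z\|_{\bsH}^2\leq\eta\|\nabla z\|_H^2+C_\eta\|\b z\|_*^2$ on $\bZ_0$ (a consequence of \eqref{ehrling}, \eqref{poin2} and the trace estimate), is exactly the tool needed here as well, so the repair is straightforward: prove the dual-norm estimate first, then run your energy estimate, and the remainder of your plan (the $L^1$ and Yosida-based $L^2$ bounds on the nonlinearities, the passage to the limit, the boundary inclusion, and the continuous dependence leading to uniqueness of $\b u$ and $\b\mu-\b\xi$) proceeds as you describe and as in the paper.
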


\begin{remark}\rm
The existence of an approximating sequence $\{\b u_0^\delta\}_{\delta\in(0,1)}$
satisfying \eqref{ip1} is discussed in the Appendix under the additional  
assumption \eqref{ip_extra} specified below. In the general case, 
we point out that the most natural choice for $\{\b u_0^\delta\}_{\delta\in(0,1)}$
is given by the constant sequence $\b u_0$ in the case 
$\b u_0\in \b V$. Similarly, a typical choice for 
$\{\b f^\delta\}_{\delta\in(0,1)}$ is the constant one $\b f$.
\end{remark}

\begin{thm}
  \label{thm:conv2}
  In the setting of Theorem~\ref{thm:conv}, suppose also that 
  \begin{align}\label{ip_extra}
  &D(\beta)=D(\beta_\Gamma), \quad \hbox{there exists a 
  constant $M\geq1$ such that} 
  \nonumber\\
  &\quad{}\frac1M|\beta_\Gamma^\circ(r)| - M\leq |\beta^\circ(r)| \leq 
  M (|\beta_\Gamma^\circ(r)|+1)
  \quad\forall\,r\in D(\beta).
  \end{align}
  Then, the limiting triplet $(\b  u, \b\mu, \b\xi)$ 
  obtained in Theorem~\ref{thm:conv}
  also satisfies
  \begin{align*}
  &u\in L^2(0,T; H^{3/2}(\Omega)), \qquad u_\Gamma\in L^2(0,T; V_\Gamma),
  \qquad\partial_\nu u \in L^2(0,T; H_\Gamma),\\
  &\xi_\Gamma\in L^2(0,T; H_\Gamma), \qquad
  \xi_\Gamma\in\beta_\Gamma(u_\Gamma) \quad\text{a.e.~in } \Sigma,
  \end{align*}
  and, in addition to \eqref{c1}--\eqref{c7}, the following convergences hold:
  \begin{align*}
  \b\xi^\delta \wto \b\xi \quad&\text{in } L^2(0,T; \bH),\\
  \delta u_\Gamma^\delta \wto 0
   \quad&\text{in } L^2(0,T; H^{3/2}(\Gamma)),\\
  \partial_\nu u^\delta - \delta \Delta_\Gamma u_\Gamma^\delta \wto \partial_\nu u
   \quad&\text{in } L^2(0,T; H_\Gamma),
  \end{align*}
  In particular, \eqref{pier4} entails
\begin{align}
  \label{pier5}
  \mu_\Gamma = \partial_\nu u 
  +\xi_\Gamma + \pi_{{\takeshi \Gamma}}(u_\Gamma) - f_\Gamma
  \qquad&\text{a.e.~on } \Sigma.
\end{align}
\end{thm}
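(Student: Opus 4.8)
The plan is to derive a $\delta$-uniform bound on $\|\b\xi^\delta\|_{L^2(0,T;\bsH)}$ under the extra assumption \eqref{ip_extra}, which then upgrades all the weak-convergence statements of Theorem~\ref{thm:conv} and lets me identify the limit. The starting point is the standard monotonicity trick: test \eqref{var2_delta_pos} with $\b z=(u^\delta-m(\b u_0),(u^\delta-m(\b u_0))_{|_\Gamma})\in\bV$ after subtracting the mean, or more precisely test with the pair built from $\beta^\circ(u^\delta)$ and $\beta_\Gamma^\circ(u_\Gamma^\delta)$ in the spirit of the Miranville--Zelik / CGS estimates. Concretely, I would test \eqref{var2_delta_pos} by $\b z=(\beta_\eps(u^\delta),\beta_{\Gamma,\eps}(u_\Gamma^\delta))$ for a Yosida-type regularization (so that $\b z\in\bV$ is legitimate), integrate, use $\int_\Omega\nabla u^\delta\cdot\nabla\beta_\eps(u^\delta)\geq0$ and $\delta\int_\Gamma\nabla_\Gamma u_\Gamma^\delta\cdot\nabla_\Gamma\beta_{\Gamma,\eps}(u_\Gamma^\delta)\geq0$, and bound $(\b\mu^\delta-\b\pi(\b u^\delta)+\b f^\delta,\b z)_{\bsH}$ using the already-established $\delta$-uniform bounds on $\b\mu^\delta$ (in $L^2(0,T;\bV)$), on $\b u^\delta$, and on $\b f^\delta$ coming from \eqref{ip1}--\eqref{ip2}. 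The growth condition \eqref{ip_extra} is exactly what converts a bound on $\int_\Sigma|\beta_{\Gamma,\eps}(u_\Gamma^\delta)|^2$ into one on $\int_Q|\beta_\eps(u^\delta)|^2$ and vice versa; the two-sided form of \eqref{ip_extra} (as opposed to the one-sided \textbf{A2}) is what makes the boundary term controllable by the bulk term and closes the estimate. Passing $\eps\to0$ gives $\|\b\xi^\delta\|_{L^2(0,T;\bsH)}\leq C$ uniformly in $\delta$.

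Next I would extract the consequences. From the uniform $L^2(0,T;\bsH)$ bound on $\b\xi^\delta$ I get $\b\xi^\delta\wto\b\xi$ in $L^2(0,T;\bH)$ (up to the subsequence already fixed in Theorem~\ref{thm:conv}), and in particular $\xi_\Gamma\in L^2(0,T;H_\Gamma)$; since we also have $\delta\Delta_\Gamma u_\Gamma^\delta\to0$ in the appropriate sense once $\|\b\xi^\delta\|$ is bounded, comparison in \eqref{pier2} (i.e.\ $\partial_\nu u^\delta=\mu_\Gamma^\delta+\delta\Delta_\Gamma u_\Gamma^\delta-\xi_\Gamma^\delta-\pi_\Gamma(u_\Gamma^\delta)+f_\Gamma^\delta$) gives a uniform $L^2(0,T;H_\Gamma)$ bound on $\partial_\nu u^\delta-\delta\Delta_\Gamma u_\Gamma^\delta$, hence on $\delta\Delta_\Gamma u_\Gamma^\delta$ once I know $\partial_\nu u^\delta$ is bounded. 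To bound $\partial_\nu u^\delta$ itself in $L^2(0,T;H_\Gamma)$ I use elliptic regularity for the problem $-\Delta u^\delta+u^\delta=\mu^\delta-\xi^\delta-\pi(u^\delta)+f^\delta+u^\delta\in L^2(0,T;H)$ with boundary datum $u_\Gamma^\delta\in L^\infty(0,T;Z_\Gamma)$, which yields $u^\delta\in L^2(0,T;H^{3/2}(\Omega))$ with $\partial_\nu u^\delta\in L^2(0,T;H_\Gamma)$, uniformly; then $\delta\Delta_\Gamma u_\Gamma^\delta$ is bounded in $L^2(0,T;H_\Gamma)$, i.e.\ $\delta u_\Gamma^\delta$ is bounded in $L^2(0,T;V_\Gamma)$ and in fact $\delta\|u_\Gamma^\delta\|_{H^{3/2}(\Gamma)}$ goes to zero (one order of $\delta$ is lost to the elliptic estimate, the remaining order gives the vanishing). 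Weak limits then give $u\in L^2(0,T;H^{3/2}(\Omega))$, $u_\Gamma\in L^2(0,T;V_\Gamma)$, $\partial_\nu u\in L^2(0,T;H_\Gamma)$ and the three extra convergences. The identification $\xi_\Gamma\in\beta_\Gamma(u_\Gamma)$ a.e.\ on $\Sigma$ follows from strong convergence of $u_\Gamma^\delta$ to $u_\Gamma$ in $L^2(\Sigma)$ (which is already in \eqref{c1}), weak convergence of $\xi_\Gamma^\delta$ in $L^2(0,T;H_\Gamma)$, maximal monotonicity of $\beta_\Gamma$, and the usual $\limsup$ argument $\limsup\int_\Sigma\xi_\Gamma^\delta u_\Gamma^\delta\leq\int_\Sigma\xi_\Gamma u_\Gamma$ (here the term $\delta\int_\Gamma\nabla_\Gamma u_\Gamma^\delta\cdot\nabla_\Gamma u_\Gamma^\delta\geq0$ has the right sign to be discarded). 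Finally \eqref{pier4}, now with $\xi_\Gamma\in L^2(0,T;H_\Gamma)$ and $\partial_\nu u\in L^2(0,T;H_\Gamma)$, holds as an identity in $H_\Gamma$, i.e.\ \eqref{pier5}.

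The main obstacle I anticipate is the uniform-in-$\delta$ estimate on $\b\xi^\delta$ in $L^2(0,T;\bsH)$: the $\delta$-weight in front of $\Delta_\Gamma u_\Gamma^\delta$ degenerates, so I cannot simply absorb the boundary contribution into a surface Dirichlet term the way one does for fixed $\delta>0$; I have to make sure the test against $\beta_{\Gamma,\eps}(u_\Gamma^\delta)$ produces a term that is estimated purely by the $\delta$-independent bounds \eqref{ip1}--\eqref{ip2} and by $\|\b\mu^\delta\|_{L^2(0,T;\bsV)}$, and that the growth comparison \eqref{ip_extra} is strong enough to re-close the loop between bulk and boundary contributions of $\b\xi^\delta$. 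A secondary technical point is the loss of one power of $\delta$ in the elliptic regularity step (giving $\delta u_\Gamma^\delta$ bounded, not $\delta^{1/2}u_\Gamma^\delta$), which is exactly why the stated conclusion is $\delta u_\Gamma^\delta\wto0$ in $L^2(0,T;H^{3/2}(\Gamma))$ rather than something stronger; care is needed to track the $\delta$-powers correctly there.
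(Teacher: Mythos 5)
Your overall strategy (a $\delta$-uniform $L^2$ bound on $\b\xi^\delta$ coming from \eqref{ip_extra}, then comparison in \eqref{pier2}, elliptic regularity, and monotone identification) is the same as the paper's, but two of your steps do not work as written. First, the test pair $\b z=(\beta_\eps(u^\delta),\beta_{\Gamma,\eps}(u_\Gamma^\delta))$ is not an element of $\bV$: membership in $\bV$ requires $z_\Gamma=z_{|\Gamma}$, and the trace of $\beta_\eps(u^\delta)$ is $\beta_\eps(u_\Gamma^\delta)$, not $\beta_{\Gamma,\eps}(u_\Gamma^\delta)$, so the Yosida smoothing does not make your test admissible. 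The paper runs this estimate at the level of the $\lambda$-approximated problem of \cite{CF15} and tests with the trace-compatible pair $(\beta_\lambda(u_\lambda^\delta),\beta_\lambda(u_{\lambda,\Gamma}^\delta))$; the boundary cross term $\int_\Sigma\beta_\lambda(u_{\lambda,\Gamma}^\delta)\beta_{\Gamma,\lambda}(u_{\lambda,\Gamma}^\delta)$ is then handled through the growth condition, and the already available bound \eqref{est8'} together with the lower bound in \eqref{ip_extra} (transferred to the Yosida approximations, cf.\ \cite[Appendix]{CF20}) yields \eqref{est13}. Working at the approximated-problem level is also what guarantees that the bound is inherited by the actual selection $\xi_\Gamma^\delta$, which is constructed as the weak limit of $\beta_{\Gamma,\lambda}(u_{\lambda,\Gamma}^\delta)$; testing the $\delta$-problem directly with regularised graphs would only control minimal sections, which says nothing about $\xi_\Gamma^\delta$ when $\beta_\Gamma$ is multivalued (e.g.\ the double obstacle case).

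Second, your elliptic-regularity step bounding $\partial_\nu u^\delta$ uniformly in $L^2(0,T;H_\Gamma)$ is incorrect: with $\Delta u^\delta$ bounded in $L^2(0,T;H)$ and Dirichlet datum $u_\Gamma^\delta$ bounded only in $L^\infty(0,T;Z_\Gamma)=L^\infty(0,T;H^{1/2}(\Gamma))$, one obtains $u^\delta\in H^1(\Omega)$ and $\partial_\nu u^\delta\in H^{-1/2}(\Gamma)$, not $H^{3/2}(\Omega)$ and $L^2(\Gamma)$; uniformly in $\delta$ one only has \eqref{est10}, i.e.\ $\delta^{1/2}\partial_\nu u^\delta$ bounded in $L^2(0,T;H_\Gamma)$. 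Hence your claim that $\delta\Delta_\Gamma u_\Gamma^\delta$ is uniformly bounded in $L^2(0,T;H_\Gamma)$ is unsupported. The correct route is to bound only the combination $\partial_\nu u^\delta-\delta\Delta_\Gamma u_\Gamma^\delta$ in $L^2(0,T;H_\Gamma)$ by comparison in \eqref{pier2} once \eqref{est13} is available, and $\delta\Delta_\Gamma u_\Gamma^\delta$ only in $L^2(0,T;Z_\Gamma^*)$ (estimate \eqref{est14}); since $\delta\Delta_\Gamma u_\Gamma^\delta\to0$ in a weaker norm (it is $O(\delta^{1/2})$ in $L^2(0,T;V_\Gamma^*)$ by \eqref{est1}), the weak $L^2(0,T;H_\Gamma)$ limit of the combination is $\partial_\nu u$, and only the limit $u$ gains the $H^{3/2}(\Omega)$ regularity and $u_\Gamma\in L^2(0,T;V_\Gamma)$, a posteriori, by elliptic regularity and trace theory; likewise $\delta u_\Gamma^\delta\wto0$ in $L^2(0,T;H^{3/2}(\Gamma))$ follows from the $Z_\Gamma^*$ bound on $\delta\Delta_\Gamma u_\Gamma^\delta$, not from an $H_\Gamma$ bound. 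Your final identification of $\xi_\Gamma\in\beta_\Gamma(u_\Gamma)$ and the derivation of \eqref{pier5} are fine once these estimates are in place.
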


\begin{remark}\rm
  We note that the additional regularity in Theorem~\ref{thm:conv2}
  in particular implies that $\partial_\nu u, \xi_\Gamma$ are
  well-defined in $H_\Gamma$, almost  everywhere in $(0,T)$. 
  Consequently, equation \eqref{pier4} holds not only in $Z_\Gamma^*$,
  but also in $H_\Gamma$, almost everywhere in $(0,T)$, and this
  directly implies the validity of \eqref{pier5}.
\end{remark}

\begin{thm}
  \label{thm:conv3}
  In the setting of Theorem~\ref{thm:conv2},
  assume that $m(\b u_0^\delta)=m(\b u_0)$
  for all $\delta\in(0,1)$. Then,
  there exists a constant $C>0$, independent of $\delta$, such that 
  \[
  \|\b u^\delta - \b u\|_{L^\infty(0,T;\bsV^*)\cap L^2(0,T; \bsZ)}
  \leq C\left(\delta^{1/2} +
  \|\b u_0^\delta - \b u_0\|_{\bsV^*} + \|\b f^\delta - \b f\|_{L^2(0,T; \bsH)}\right)
  \]
  for every $\delta \in (0,1)$ and, as $\delta\searrow0$,
  \[
  \b u^\delta \wto \b u \qquad\text{in } L^2(0,T; \b V).
  \]
  In particular, if
  \[
  \|\b u_0^\delta - \b u_0\|_{\bsV^*} + \|\b f^\delta - \b f\|_{L^2(0,T; \bsH)}
  =O(\delta^{1/2})
  \qquad\text{as } \delta\searrow0,
  \]
  then
   \[
  \|\b u^\delta - \b u\|_{L^\infty(0,T;\bsV^*)\cap L^2(0,T; \bsZ)} = O(\delta^{1/2})
  \qquad\text{as } \delta\searrow 0.
  \]
\end{thm}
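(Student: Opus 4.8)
The plan is to reproduce the continuous‑dependence argument underlying Theorem~\ref{thm:conv}, now comparing a solution with $\delta>0$ to the limit solution, and to show that the surface‑diffusion term $\delta\Delta_\Gamma u_\Gamma^\delta$ enters the estimate only as a perturbation of order $\delta^{1/2}$. Write $\bar{\b u}:=\b u^\delta-\b u$, $\bar{\b\mu}:=\b\mu^\delta-\b\mu$, $\bar{\b\xi}:=\b\xi^\delta-\b\xi$, $\bar{\b u}_0:=\b u_0^\delta-\b u_0$, $\bar{\b f}:=\b f^\delta-\b f$. Testing \eqref{var1_delta_pos} and \eqref{var1_delta0} with $\b z=\b1$ shows that $t\mapsto m(\b u^\delta(t))$ and $t\mapsto m(\b u(t))$ are constant, so the extra assumption $m(\b u_0^\delta)=m(\b u_0)$ gives $m(\bar{\b u}(t))=0$ for every $t$; hence $\bar{\b u}(t)\in\bH_0\subseteq\bV_{0,*}$, and since $\partial_t\bar{\b u}=-\mathcal L\bar{\b\mu}\in L^2(0,T;\bV_{0,*})$ we have $\bar{\b u}\in H^1(0,T;\bV_{0,*})$, so that $\mathcal L^{-1}\bar{\b u}(t)\in\bV_0$ is well defined and the chain rule \eqref{chain} applies to $\bar{\b u}$.

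Subtracting the two copies of \eqref{var1_delta_pos}--\eqref{var1_delta0}, testing with $\mathcal L^{-1}\bar{\b u}$, and using \eqref{chain} together with the symmetry of the bilinear form defining $\mathcal L$, I obtain
\[
  \frac{d}{dt}\,\frac12\|\bar{\b u}\|_*^2+(\bar{\b\mu},\bar{\b u})_{\bsH}=0
  \qquad\text{a.e.\ in }(0,T).
\]
To handle the second summand I subtract \eqref{var2_delta_pos} and \eqref{var2_delta0}, both tested with $\bar{\b u}(t)$. Here assumption \eqref{ip_extra} is essential: by Theorem~\ref{thm:conv2} one has $u_\Gamma\in L^2(0,T;V_\Gamma)$, hence $\bar{\b u}(t)\in\bV$ for a.e.~$t$, so that $\bar{\b u}$ is an admissible test function also in \eqref{var2_delta_pos}; moreover $\xi_\Gamma\in L^2(0,T;H_\Gamma)$ with $\xi_\Gamma\in\beta_\Gamma(u_\Gamma)$ a.e.\ on $\Sigma$, whence $\langle\xi_\Gamma,\bar u_\Gamma\rangle_{Z_\Gamma^*,Z_\Gamma}=(\xi_\Gamma,\bar u_\Gamma)_{H_\Gamma}$ and the monotonicity of $\beta_\Gamma$ is usable. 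This produces
\begin{align*}
  (\bar{\b\mu},\bar{\b u})_{\bsH}={}&\|\nabla\bar u\|_H^2
  +\delta\int_\Gamma\nabla_\Gamma u_\Gamma^\delta\cdot\nabla_\Gamma\bar u_\Gamma
  +(\xi^\delta-\xi,\bar u)_H+(\xi_\Gamma^\delta-\xi_\Gamma,\bar u_\Gamma)_{H_\Gamma}\\
  &{}+(\b\pi(\b u^\delta)-\b\pi(\b u),\bar{\b u})_{\bsH}-(\bar{\b f},\bar{\b u})_{\bsH}.
\end{align*}

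In this identity the two $\b\xi$‑terms are nonnegative by monotonicity of $\beta$ and $\beta_\Gamma$; the $\b\pi$‑term is bounded by $C\|\bar{\b u}\|_{\bsH}^2$ via assumption {\bf A3}, and the source term by $\tfrac12\|\bar{\b f}\|_{\bsH}^2+\tfrac12\|\bar{\b u}\|_{\bsH}^2$; finally, expanding $\nabla_\Gamma\bar u_\Gamma=\nabla_\Gamma u_\Gamma^\delta-\nabla_\Gamma u_\Gamma$ and using Young's inequality, the surface term obeys $\delta\int_\Gamma\nabla_\Gamma u_\Gamma^\delta\cdot\nabla_\Gamma\bar u_\Gamma\geq-\tfrac{\delta}{2}\|\nabla_\Gamma u_\Gamma\|_{H_\Gamma}^2$, and $\int_0^T\tfrac{\delta}{2}\|\nabla_\Gamma u_\Gamma\|_{H_\Gamma}^2\leq C\delta$ since $u_\Gamma\in L^2(0,T;V_\Gamma)$. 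Combining the two displays, discarding the nonnegative contributions, and invoking the Ehrling‑type inequality $\|\bar{\b u}\|_{\bsH}^2\leq\eps\|\nabla\bar u\|_H^2+C_\eps\|\bar{\b u}\|_*^2$ — legitimate because $\bar{\b u}(t)\in\bZ_0$, by the compact embeddings $\bZ\embed\bH\embed\bV^*$, the Poincar\'e inequality \eqref{poin2}, and the equivalence of $\|\cdot\|_*$ with the norm of $\bV^*$ — one absorbs $\eps\|\nabla\bar u\|_H^2$ on the left and, after integrating over $(0,t)$ and bounding $\|\bar{\b u}(0)\|_*\leq C\|\bar{\b u}_0\|_{\bsV^*}$, arrives at
\[
  \|\bar{\b u}(t)\|_*^2+\int_0^t\|\nabla\bar u\|_H^2\leq
  C\big(\delta+\|\bar{\b u}_0\|_{\bsV^*}^2+\|\bar{\b f}\|_{L^2(0,T;\bsH)}^2\big)
  +C\int_0^t\|\bar{\b u}\|_*^2.
\]
Gronwall's lemma, together with the norm equivalences $\|\cdot\|_*\sim\|\cdot\|_{\bsV^*}$ on $\bV^*$ and $\|\nabla z\|_H\sim\|\b z\|_{\bsZ}$ for $\b z=(z,z_{|\Gamma})\in\bZ_0$, and the subadditivity of $t\mapsto t^{1/2}$, then yields the claimed error bound; the stated $O(\delta^{1/2})$ implication follows at once. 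Finally, since the a priori estimates used to prove Theorem~\ref{thm:conv2} keep $\{\b u^\delta\}$ bounded in $L^2(0,T;\bV)$, the already established convergence \eqref{c1} identifies the weak limit and forces $\b u^\delta\wto\b u$ in $L^2(0,T;\bV)$.

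The main obstacle is precisely the admissibility of $\bar{\b u}$ as a test function in \eqref{var2_delta_pos} and the use of the pointwise inclusion $\xi_\Gamma\in\beta_\Gamma(u_\Gamma)$ on $\Sigma$: both rest on the additional regularity of the limiting solution provided by Theorem~\ref{thm:conv2}, which is why \eqref{ip_extra} is imposed here, while the requirement $m(\b u_0^\delta)=m(\b u_0)$ is exactly what makes $\mathcal L^{-1}\bar{\b u}$ and the chain rule \eqref{chain} meaningful.
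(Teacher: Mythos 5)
Your error estimate itself is obtained exactly as in the paper: you form the difference of the two variational formulations, test with $\mathcal L^{-1}(\b u^\delta-\b u)$ and $-(\b u^\delta-\b u)$ (legitimate because $m(\b u_0^\delta)=m(\b u_0)$ gives $m(\b u^\delta-\b u)\equiv 0$), use the cancellation and the chain rule \eqref{chain}, exploit the monotonicity of $\beta,\beta_\Gamma$ through the extra regularity $\xi_\Gamma\in L^2(0,T;H_\Gamma)$, $u_\Gamma\in L^2(0,T;V_\Gamma)$ supplied by Theorem~\ref{thm:conv2}, absorb the $\delta$-surface term by Young's inequality, and close with Ehrling and Gronwall. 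Up to the bound $\|\b u^\delta-\b u\|_{L^\infty(0,T;\bsV^*)\cap L^2(0,T;\bsZ)}\leq C(\delta^{1/2}+\|\b u_0^\delta-\b u_0\|_{\bsV^*}+\|\b f^\delta-\b f\|_{L^2(0,T;\bsH)})$ and the $O(\delta^{1/2})$ consequence, this is the paper's argument and is correct.

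The gap is in your last step, the weak convergence $\b u^\delta\wto\b u$ in $L^2(0,T;\b V)$. You claim that ``the a priori estimates used to prove Theorem~\ref{thm:conv2} keep $\{\b u^\delta\}$ bounded in $L^2(0,T;\bV)$'', but they do not: uniformly in $\delta$ one only has $\|\b u^\delta\|_{L^\infty(0,T;\bsZ)}\leq C$ together with the $\delta$-weighted bounds $\delta^{1/2}\|\b u^\delta\|_{L^\infty(0,T;\bsV)}\leq C$ and $\|\partial_\nu u^\delta-\delta\Delta_\Gamma u_\Gamma^\delta\|_{L^2(0,T;H_\Gamma)}\leq C$ from \eqref{est1} and \eqref{est14}; none of these controls $\|u_\Gamma^\delta\|_{L^2(0,T;V_\Gamma)}$ independently of $\delta$, since the normal derivative and the boundary Laplacian cannot be separated at that stage (only their combination is bounded in $L^2(0,T;H_\Gamma)$). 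The missing bound is precisely what the paper extracts from the error inequality itself: when you treated the surface term you kept only the lower bound $-\tfrac{\delta}{2}\|\nabla_\Gamma u_\Gamma\|_{H_\Gamma}^2$ and threw away the piece $\tfrac{\delta}{2}\int_0^t\|\nabla_\Gamma u_\Gamma^\delta\|_{H_\Gamma}^2$, whereas the paper retains it on the left-hand side of \eqref{error_aux}; it is this retained term that yields the boundedness of $\{u_\Gamma^\delta\}$ in $L^2(0,T;V_\Gamma)$, after which the already known convergence \eqref{c1} identifies the weak limit as in your argument. So the fix is local: keep that $\delta$-weighted gradient term in your Gronwall inequality and deduce the $L^2(0,T;V_\Gamma)$ bound from it, rather than appealing to the a priori estimates of Theorem~\ref{thm:conv2}.
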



\section{Proofs}
\setcounter{equation}{0}
\label{sec:proof}
This section is devoted to
proving Theorems~\ref{thm:conv}, \ref{thm:conv2}, and \ref{thm:conv3}.

\subsection{Uniform estimates}
\noindent{\bf First estimate.} Testing equation
\eqref{var1_delta_pos} by \takeshi{$\b1/(|\Omega|+|\Gamma|)$}
and integrating in time we get 
\beq
  \label{mass}
  m(\b u^\delta(t)) = m(\b u_0^\delta) \qquad\forall\,t\in[0,T].
\eeq
By assumption \eqref{ip3}
on the initial data \takeshi{$\{ \b u^\delta_0 \}_{\delta\in (0,1)}$}, 
it holds that $m(\b u^\delta_0)\to m({\takeshi {\b u}_0})$ as $\delta\searrow0$.
Hence, from \eqref{ip1_delta_pos}, \eqref{ip1_delta0}, and
assumption {\bf A2} it follows that
\beq
  \label{interval}
  \exists\,[a,b]\subset\operatorname{Int}D(\beta_\Gamma)
  \subseteq\operatorname{Int}D(\beta):\quad
  m(\b u_0^\delta)\in[a,b] \quad\forall\,\delta \in (0,1).
\eeq
We deduce that
there exists a constant $C>0$, independent of $\delta$, such that 
\beq
  \label{est0}
  \|m(\b u^\delta)\|_{L^\infty(0,T)}\leq C.
\eeq
In the same spirit, equation \eqref{var1_delta_pos} 
directly implies by comparison \takeshi{and the \pier{Schwarz} inequality} that 
\beq
  \label{est00}
  \|\partial_t\b u^\delta(t)\|_{\bsV^*}\leq
  \|\nabla\mu^\delta(t)\|_H + \|\nabla_\Gamma\mu_\Gamma(t)\|_{H_\Gamma}
  \qquad\text{for a.e.~$t\in(0,T)$}.
\eeq

\pier{
\noindent{\bf Second estimate.} 
We first note that by \eqref{mass} we have that 
$\b u^\delta - m(\b u_0^\delta)\b 1\in \b V_0$ in $[0,T]$. Hence, 
we can test equation \eqref{var1_delta_pos} by 
$\mathcal L^{-1}(\b u^\delta - m(\b u_0^\delta)\b 1)$, 
equation \eqref{var2_delta_pos} by $-(\b u^\delta - m(\b u_0^\delta)\b 1)$, 
and sum. By doing this, we note that 
there is a cancellation of two terms since for 
$\b z=\mathcal L^{-1}(\b u^\delta - m(\b u_0^\delta)\b 1)$ we have
\begin{align*}
  &\int_\Omega\nabla\mu^\delta\cdot\nabla z
  +\int_\Gamma\nabla_\Gamma\mu_\Gamma^\delta\cdot 
  \nabla_\Gamma z_\Gamma \\
  &=
  \int_\Omega\nabla(\mu^\delta-m(\b\mu^\delta))\cdot\nabla z
  +\int_\Gamma\nabla_\Gamma(\mu_\Gamma^\delta-m(\b\mu^\delta))\cdot
  \nabla_\Gamma z_\Gamma \\
  &  =\langle\mathcal L(\b\mu^\delta-m(\b\mu^\delta)\b 1),\b z\rangle_{\bsV^*, \bsV}
  =(\b\mu^\delta-m(\b\mu^\delta)\b 1, \b u^\delta-m(\b u^\delta)\b 1)_{\bsH}\\
  &=(\b\mu^\delta, \b u^\delta-m(\b u^\delta)\b 1)_{\bsH}.
\end{align*}
Hence, we obtain
\begin{align*}
  &\takeshi{ \bigl \langle} \partial_t \b u^\delta, \mathcal L^{-1}(\b u^\delta 
  - m(\b u_0^\delta)\b 1) \takeshi{\bigr \rangle}_{\bsV^*, \bsV}
  +\int_\Omega|\nabla u^\delta|^2
  +\delta\int_\Gamma|\nabla_\Gamma u_\Gamma^\delta|^2\\
  \nonumber
  &\qquad+\int_\Omega\xi^\delta(u^\delta - m(\b u_0^\delta))
  +\int_\Omega\xi_\Gamma^\delta(u_\Gamma^\delta - m(\b u_0^\delta))
  \\
  &=(\takeshi{\b f^\delta -\b \pi (\b u^\delta)},\b u^\delta - m(\b u_0^\delta)\b 1)_{\bsH}.
\end{align*}
Thanks to the remark \eqref{interval}
we can use the inequalities devised by Miranville and Zelik \cite{MZ04}
(a proof can be checked also in \cite[\S~5]{GMS09}) to infer that
there is $C_{MZ}>0$ such that
\begin{align*}
  \int_\Omega\xi^\delta(u^\delta - m(\b u_0^\delta))
  +\int_{\takeshi{\Gamma}}\xi_\Gamma^\delta(u_\Gamma^\delta - m(\b u_0^\delta))
  \geq 
  C_{MZ}\left(\|\xi^\delta\|_{L^1(\Omega)} +
   \|\xi_\Gamma^\delta\|_{L^1(\Gamma)}\right) - C
\end{align*}
almost everywhere in $(0,T)$,
from which we obtain 
\begin{align}
  \nonumber
  &\bigl \langle \partial_t \b u^\delta, \mathcal L^{-1}(\b u^\delta 
  - m(\b u_0^\delta)\b 1) \takeshi{\bigr \rangle}_{\bsV^*, \bsV}
  +\int_\Omega|\nabla u^\delta|^2
  +\delta\int_\Gamma|\nabla_\Gamma u_\Gamma^\delta|^2\\
  \nonumber
  &\qquad+C_{MZ}\left(\|\xi^\delta\|_{L^1(\Omega)} +
   \|\xi_\Gamma^\delta\|_{L^1(\Gamma)}\right)
  \\
  &\leq C + (\b f^\delta -\b \pi (\b u^\delta),\b u^\delta - m(\b u_0^\delta)\b 1)_{\bsH}.
  \label{est_aux}
\end{align}
We now integrate \eqref{est_aux} in time
using the chain rule \eqref{chain}:
recalling also 
\eqref{mass} and \eqref{est0} and adding $ |m(\b u^\delta(t))|^2$ to both sides,
we easily obtain 
\begin{align*}
  &|m(\b u^\delta(t))|^2+
  \frac12\|\b u^\delta(t)-m(\b u^\delta(t))\|_*^2
  +\int_{Q_t}|\nabla u^\delta|^2
  +\delta\int_{\Sigma_t}|\nabla_\Gamma u_\Gamma^\delta|^2\\
  \nonumber
  &\qquad+C_{MZ}\left(\int_{Q_t}|\xi^\delta| + \int_{\Sigma_t}|\xi_\Gamma^\delta|\right)
  \\
  &\leq C + \frac12\|u_0^\delta - m (\b u_0^\delta)\|_*^2
  + \int_0^t (\b f^\delta -\b \pi (\b u^\delta),\b u^\delta - m(\b u_0^\delta)\b 1)_{\bsH}\, .
\end{align*}
Thanks to the Poincar\'e inequality \eqref{poin2},
the H\"older and Young inequalities together with the Lipschitz 
continuity of $\b\pi$ we infer that 
\begin{align*}
  &\|\b u^\delta(t)\|_*^2
  +\int_0^t\|u^\delta(s)\|_V^2\, ds
  +\delta\int_{\Sigma_t}|\nabla_\Gamma u_\Gamma^\delta|^2\\
  &\leq C
  \left(1 + \|\b f^\delta\|^2_{L^2(0,T;\bsH)} + 
  \int_0^t\|\b u^\delta(s)\|^2_{\bsH}\,ds\right)
  \qquad\forall\,t\in[0,T].
\end{align*}
At this point, since $\b Z\embed \b H$ with compact embedding, 
the following Ehrling lemma holds: 
\beq\label{ehrling}
  \forall\,\eps>0,\quad\exists\,C_\eps>0:\quad
  \|\b z\|_{\bsH}^2 \leq \eps \|\b z\|_{\bsZ}^2 + C_\eps\|\b z\|_{\bsV^*}^2
  \quad\forall\,\b z\in\b Z.
\eeq
Noting that by the trace theorems it holds 
$\|\b z\|_{\bsZ}\leq C\|z\|_V$ for every 
$\b z=(z,z_\Gamma)\in\b Z$, 
applying this inequality on the right-hand side and
taking also assumption \eqref{ip2} into account we infer that, for every $\eps>0$,
\begin{align*}
  &\|\b u^\delta(t)\|_*^2
  +\int_0^t\|u^\delta(s)\|_V^2\, ds
  +\delta\int_{\Sigma_t}|\nabla_\Gamma u_\Gamma^\delta|^2\\
  &\leq C+ \eps   \int_0^t\|u^\delta(s)\|_V^2\, ds + 
  C_\eps\int_0^t\|\b u^\delta(s)\|^2_*\,ds
  \qquad\forall\,t\in[0,T].
\end{align*}
Choosing for example $\eps=1/2$ and rearranging the terms, 
an application of the Gronwall lemma yields 
\beq
  \label{est_prelim}
  \|\b u^\delta\|_{L^\infty(0,T;\bsV^*)\cap L^2(0,T;\bsZ)}\leq C.
\eeq
}

\noindent{\bf Third estimate.} 
We proceed now in a formal but perhaps more explicative way,
referring to \cite{CF15} for a rigorous approach.
Testing 
\eqref{var1_delta_pos} by $\b\mu^\delta$, 
\eqref{var2_delta_pos} by $-\partial_t \b u^\delta$, summing, and integrating,
we obtain the formal energy inequality
\begin{align*}
  &\int_{Q_t}|\nabla\mu^\delta|^2 
  + \int_{\Sigma_t}|\nabla_\Gamma\mu_\Gamma^\delta|^2
  +\frac12\int_\Omega|\nabla u^\delta(t)|^2
  +\frac\delta2\int_\Gamma|\nabla_\Gamma u_\Gamma^\delta(t)|^2\\
  &\qquad+\int_\Omega(\widehat\beta+\widehat\pi)(u^\delta(t))
  +\int_\Gamma(\widehat\beta_\Gamma+\widehat\pi_\Gamma)(u_\Gamma^\delta(t))\\
  &\leq\frac12\int_\Omega|\nabla u^\delta_0|^2
  +\frac\delta2\int_\Gamma|\nabla_\Gamma u_{0\Gamma}^\delta|^2
  +\int_\Omega(\widehat\beta+\widehat\pi)(u^\delta_0)
  +\int_\Gamma(\widehat\beta_\Gamma+\widehat\pi_\Gamma)(u_{0\Gamma}^\delta)
  +\int_0^t(\partial_t\b u^\delta,\b f^\delta)_{\bsH}.
\end{align*}
Let us stress that this inequality is only formal, 
since that the last term is not well-defined 
in general for the regularities of $\partial_t\b u^\delta$ and $\b f^\delta$.
Nonetheless, we can give rigorous sense to it by exploiting the 
representation $\b f^\delta=\b g^\delta + \b h^\delta$ and 
using integration by parts in time: indeed, we formally have 
\begin{align*}
\int_0^t(\partial_t\b u^\delta,\b f^\delta)_{\bsH} =
(\b u^\delta(t),\b g^\delta(t))_{\bsH} - (\b u^\delta_0,\b g^\delta(0))_{\bsH} 
-\int_0^t(\takeshi{\b u^\delta},\partial_t \b g^\delta)_{\bsH} +
\int_0^t\langle\partial_t\b u^\delta,\b h^\delta\rangle_{\bsV^*, \bsV}.
\end{align*}
It is then clear that all terms above make sense, 
and a classical argument based on suitable approximations
of the problem at $\delta>0$ fixed (see \cite{CF15}) yields the rigorous estimate 
\begin{align*}
  &\int_{Q_t}|\nabla\mu^\delta|^2 
  + \int_{\Sigma_t}|\nabla_\Gamma\mu_\Gamma^\delta|^2
  +\frac12\int_\Omega|\nabla u^\delta(t)|^2
  +\frac\delta2\int_\Gamma|\nabla_\Gamma u_\Gamma^\delta(t)|^2
  \\
  &\qquad 
  \pier{+\int_\Omega\widehat\beta(u^\delta(t))
  +\int_\Gamma\widehat\beta_\Gamma(u_\Gamma^\delta(t))}\\
  &\leq\frac12\int_\Omega|\nabla u^\delta_0|^2
  +\frac\delta2\int_\Gamma|\nabla_\Gamma u_{0\Gamma}^\delta|^2
  +\int_\Omega(\widehat\beta+\widehat\pi)(u^\delta_0)
  +\int_\Gamma(\widehat\beta_\Gamma+\widehat\pi_\Gamma)(u_{0\Gamma}^\delta)\\
  &\qquad
\pier{- \int_\Omega\widehat\pi(u^\delta(t))
   - \int_\Gamma \widehat\pi_\Gamma(u_\Gamma^\delta(t))}  
  \\
  &\qquad 
  +(\b u^\delta(t),\b g^\delta(t))_{\bsH} - (\b u^\delta_0,\b g^\delta(0))_{\bsH} 
-\int_0^t(\takeshi{\b u^\delta},\partial_t \b g^\delta)_{\bsH} +
  \int_0^t\langle\partial_t\b u^\delta,\b h^\delta\rangle_{\bsV^*, \bsV}.
\end{align*}
Summing now the estimate \eqref{est0}
and using the fact that \eqref{norm2}
yields an equivalent norm in $V$, we obtain a 
control on the $V$-norm of $u^\delta(t)$ on the left-hand side.
\pier{Observe also that $\widehat\beta,\, \widehat\beta_\Gamma$ are nonnegative 
and that $\widehat\pi,\, \widehat\pi_\Gamma$ are at most with quadratic growth since 
$\pi,\, \pi_\Gamma$ are Lipschitz continuous. 
Consequently, by virtue of the bounds \eqref{ip1}--\eqref{ip2} on the data
and the Young inequality} we obtain that
\begin{align*}
  &\int_{Q_t}|\nabla\mu^\delta|^2 
  + \int_{\Sigma_t}|\nabla_\Gamma\mu_\Gamma^\delta|^2
  +\|u^\delta(t)\|_V^2
  +\delta\int_\Gamma|\nabla_\Gamma u_\Gamma^\delta(t)|^2\\
  &\leq
   C\left(1 +\pier{ \|\b u^\delta(t)\|_{\bsH}^2 } +
    \int_0^t\|\partial_t\b g^\delta(s)\|_{\bsH}
  \|\b u^\delta(s)\|_{\bsH}\,ds \right)+ 
  \frac14\|\partial_t \b u^\delta\|_{L^2(0,t; \bsV^*)}^2
\end{align*}
for a certain constant $C>0$ independent of $\delta$.
\pier{Now, recalling again that
the classical trace theory implies 
that $\|\b z\|_{\bsZ}\leq C\|z\|_V$ for every 
$\b z=(z,z_\Gamma)\in\b Z$, by the Ehrling inequality \eqref{ehrling} 
and the already proved estimate \eqref{est_prelim} we have, for all $\eps>0$,
\[
  \|\b u^\delta(t)\|_{\bsH}^2 \leq \eps\|u^\delta(t)\|_V^2 + C_\eps
  \qquad\forall\,t\in[0,T],
\]
where $C_\eps$ is independent of $t$ and $\delta$. Hence, 
choosing $\eps$ small enough and rearranging the terms, 
in view also of the 
inequality \eqref{est00}
on the right-hand side, we infer that
}
\begin{align*}
  &\int_{Q_t}|\nabla\mu^\delta|^2 
  + \int_{\Sigma_t}|\nabla_\Gamma\mu_\Gamma^\delta|^2
  +\pier{\frac12}\|u^\delta(t)\|_V^2
  +\delta\int_\Gamma|\nabla_\Gamma u_\Gamma^\delta(t)|^2\\
  &\leq C\left(1 + \int_0^t\|\partial_t\b g^\delta(s)\|_{\bsH}
  \|u^\delta(s)\|_{V}\,ds \right)+ 
  \frac12\int_{Q_t}|\nabla\mu^\delta|^2 
  + \frac12\int_{\Sigma_t}|\nabla_\Gamma\mu_\Gamma^\delta|^2.
\end{align*}
Hence, rearranging the terms yields, thanks to the Gronwall lemma, that
\begin{align}
  \label{est1}
  \|\b u^\delta\|_{L^\infty(0,T;\bsZ)} + \delta^{1/2}\|\b u^\delta\|_{L^\infty(0,T;\bsV)} &\leq C,\\
  \label{est2}
  \|\nabla\mu^\delta\|_{L^2(0,T; H)} + 
  \|\nabla_\Gamma\mu_\Gamma^\delta\|_{L^2(0,T; H_\Gamma)} &\leq C.
\end{align}
Moreover, the estimate \eqref{est00} implies also that
\beq
  \label{est4}
  \|\partial_t \b u^\delta\|_{L^2(0,T; \bsV^*)}\leq C.
\eeq

\noindent{\bf Fourth estimate.} 
By comparison in \eqref{est_aux} we have, almost everywhere on $(0,T)$,
\begin{align*}
   \|\xi^\delta\|_{L^1(\Omega)} +
   \|\xi_\Gamma^\delta\|_{L^1(\Gamma)}
  &\leq 
  C\left(1+\|\partial_t\b u^\delta\|_{\bsV^*}+
  \|\b f^\delta\|_{\bsH}\right)
  \left(1+
  \|\b u^\delta\|_{L^\infty(0,T; \bsH)}\right),
\end{align*}
and the estimates \eqref{est1}--\eqref{est4} yield
\beq
  \label{est5}
  \|\xi^\delta\|_{L^2(0,T; L^1(\Omega))} + \|\xi_\Gamma^\delta\|_{L^2(0,T; L^1(\Gamma))} \leq C.
\eeq
Testing \eqref{var2_delta_pos} by \takeshi{$\b 1/(|\Omega|+|\Gamma|)$}
and using the estimates \eqref{est1}, \eqref{est5}, together with the 
Lipschitz continuity of $\b \pi$,
it follows that 
\beq
  \label{est6}
  \|m(\b\mu^\delta)\|_{L^2(0,T)} \leq C,
\eeq
so that by \eqref{est2} and the 
equivalent norm \eqref{norm1} in $\bV$ we get 
\beq
  \label{est7}
  \|\b\mu^\delta\|_{L^2(0,T;\bsV)}\leq C.
\eeq

\noindent{\bf Fifth estimate.} 
The idea now is to test 
equation \eqref{var2_delta_pos} by $(\xi^\delta, \xi^\delta_{|\Gamma})$:
however, this is only formal due to the regularity of $\xi^\delta$.
To make it rigorous, we recall that from \cite{CF15}
the system \eqref{eq1}--\eqref{eq2_init}
can be seen as limit as $\lambda\searrow0$
of a suitable approximated system
where $\beta$ and $\beta_{\Gamma}$ are replaced by 
their Yosida approximations $\beta_\lambda$ and $\beta_{\Gamma,\lambda}$,
with $\lambda\in(0,\lambda_0)$.
In this case, $\xi^\delta$ is exactly the weak limit in $L^2(0,T; H)$
of the respective sequence $\beta_\lambda(u^\delta_\lambda)$.
At this level, the Lipschitz-continuity of $\beta_\lambda$
yields the desired regularity $(\beta_\lambda(u_\lambda^\delta), 
\beta_\lambda(u_{\lambda,\Gamma}^\delta))\in \bV$ almost everywhere in $(0,T)$.
Hence, testing the $\lambda$-regularised equation \eqref{var2_delta_pos} by
$(\beta_\lambda(u_\lambda^\delta), 
\beta_\lambda(u_{\lambda,\Gamma}^\delta))$ yields 
\begin{align*}
  &\int_{Q_t}\beta_\lambda'(u_\lambda^\delta)|\nabla u_\lambda^\delta|^2
  +\delta\int_{\Sigma_t}\beta_{\lambda}'(u_{\lambda,\Gamma}^\delta)
  |\nabla_\Gamma u_{\lambda,\Gamma}^\delta|^2
  +\int_{Q_t}|\beta_\lambda(u_\lambda^\delta)|^2
  +\int_{\Sigma_t}\beta_\lambda(u_{\lambda,\Gamma}^\delta)
  \beta_{\Gamma,\lambda}(u_{\lambda,\Gamma}^\delta)\\
  &\qquad
  =\int_0^t(\takeshi{ \mu^\delta_\lambda -
  f^\delta - \pi(u_\lambda^\delta)},\beta_\lambda(u_\lambda^\delta))_H
  +\int_0^t(\mu^\delta_{\lambda,\Gamma}-
  f_\Gamma^\delta - \pi_\Gamma(u_{\lambda,\Gamma}^\delta),
  \beta_\lambda(u_{\lambda,\Gamma}^\delta))_{H_\Gamma}.
\end{align*}
Now, we note that \eqref{dom_beta} yields an analogous 
inequality on the Yosida approximations $\beta_\lambda$ and 
$\beta_{\Gamma,\lambda}$ (see for example \cite{CC13}), from which we 
have the control from below 
\[
  \int_{\Sigma_t}\beta_\lambda(u_{\lambda,\Gamma}^\delta)
  \beta_{\Gamma,\lambda}(u_{\lambda,\Gamma}^\delta)\geq
  \frac1{2M}\int_{\Sigma_t}|\beta_\lambda(u_{\lambda,\Gamma}^\delta)|^2 - C.
\]
Consequently,
by the monotonicity of $\beta_\lambda$,
the Young inequality, the Lipschitz continuity of $\b\pi$,
and the estimate \eqref{est5}
we obtain, after rearranging the terms,
\[
  \|\beta_\lambda(u^\delta_\lambda)\|_{L^2(0,T; H)}^2
  +\|\beta_{\lambda}(u^\delta_{\lambda,\Gamma})\|_{L^2(0,T; H_\Gamma)}^2
  \leq C\left(1+ \|\b u^\delta_\lambda\|_{L^2(0,T;\bsH)}^2\right),
\]
where $C>0$ is independent of both $\delta$ and $\lambda$.
Consequently, by the estimate \eqref{est1} we obtain
\beq
  \label{est8'}
  \|\beta_\lambda(u_{\lambda}^\delta)\|_{L^2(0,T; H)}+
  \|\beta_\lambda(u_{\lambda,\Gamma}^\delta)\|_{L^2(0,T; H_\Gamma)}\leq C
  \qquad\forall\,\lambda\in(0,\lambda_0),
\eeq
from which it follows in particular, 
by weak lower semicontinuity as $\lambda\searrow0$, that
\beq
  \label{est8}
  \|\xi^\delta\|_{L^2(0,T; H)} \leq C.
\eeq
Now, in view of Remark~\ref{rem1},
by comparison in equation \eqref{pier1} and the estimates just proved we have 
\beq
  \label{est9}
  \|\Delta u^\delta\|_{L^2(0,T; H)}\leq C.
\eeq
By the classical trace theorems \cite[Thm.~2.27]{BG87}
and elliptic regularity \cite[Thm.~3.2]{BG87},
the estimates \eqref{est1} and \eqref{est9} yield
\beq
  \label{est10}
  \|\partial_\nu u^\delta\|_{L^2(0,T; Z_\Gamma^*)}
  +\delta^{1/2}\|\partial_\nu u^\delta\|_{L^2(0,T; H_\Gamma)}\leq C,
\eeq
so that by comparison in \eqref{pier2}
and estimate \eqref{est7} we infer that
\beq
  \label{est11}
  \|-\delta\Delta_\Gamma u_\Gamma^\delta 
  + \xi_\Gamma^\delta\|_{L^2(0,T; Z_\Gamma^*)}\leq C.
\eeq
Eventually, this implies together with \eqref{est1} that 
\beq
  \label{est12}
  \delta^{1/2}\|\Delta_\Gamma u_\Gamma^\delta\|_{L^2(0,T; V_\Gamma^*)}+
  \|\xi_\Gamma^\delta\|_{L^2(0,T; V_\Gamma^*)}\leq C.
\eeq

\subsection{Passage to the limit}
From the estimates \eqref{est1}--\eqref{est12}
and weak and weak* compactness, we infer that 
there exists a triplet $(\b u, \b\mu, \b\xi)$ with
\begin{align*}
 &\b u\in H^1(0,T; \bV^*)\cap L^\infty(0,T;\bZ), \qquad\Delta u\in L^2(0,T; H),\\
 &\b\mu\in L^2(0,T; \bV),\\
 &\b\xi\in L^2(0,T; H\times Z_\Gamma^*),
\end{align*}
such that, as $\delta\searrow0$, on a possibly relabelled subsequence, 
\begin{align}
  \label{conv1}
  \b u^\delta \wstarto \b u \quad&\text{in } H^1(0,T; \bV^*)\cap L^\infty(0,T;\bZ),\\
  \Delta u^\delta \wto \Delta u \quad&\text{in } L^2(0,T; H),\\
  \b \mu^\delta \wto \b \mu \quad&\text{in }  L^2(0,T;\bV),\\
  \label{conv4}
  \b\xi^\delta \wto \b\xi \quad&\text{in }  L^2(0,T; H\times V_\Gamma^*),\\
  \delta \b u^\delta \to \takeshi{\b 0} \quad&\text{in } L^\infty(0,T; \bV),\\
  \label{conv6} 
  -\delta\Delta_\Gamma u_\Gamma^\delta + 
  \xi^\delta_{{\takeshi \Gamma}} \wto  \xi_\Gamma \quad&\text{in } L^2(0,T; Z_\Gamma^*).
\end{align}
In particular, 
by the Aubin\takeshi{--}Lions and Simon compactness results 
(see e.g.~\cite[\S~8, Cor.~4]{Sim87}),
the compact inclusion $\bZ\embed \bH$
implies that
\beq
  \label{conv7}
  \b u^\delta \to \b u \quad\text{in } C^0([0,T]; \bH),
\eeq
which can be rewritten as
\[
  u^\delta \to u \quad\text{in } C^0([0,T]; H), \qquad
  u^\delta_\Gamma \to u_\Gamma \quad\text{in } C^0([0,T]; H_\Gamma).
\]
Hence, the Lipschitz continuity of $\pi$ and $\pi_\Gamma$ implies also that 
\[
  \pi(u^\delta) \to \pi(u) \quad\text{in } C^0([0,T]; H), \qquad
  \pi_\Gamma(u^\delta_\Gamma) \to \pi_\Gamma(u_\Gamma)
   \quad\text{in } C^0([0,T]; H_\Gamma),
\]
and therefore 
\beq
  \label{conv8}
\b\pi(\b u^\delta)\to\b\pi(\b u) \qquad\text{in } C^0([0,T]; \bH).
\eeq
Hence, passing to the weak limit in \eqref{var1_delta_pos}--\eqref{var2_delta_pos} yields exactly \eqref{var1_delta0}--\eqref{var2_delta0}.

In order to conclude, we only need to prove conditions \eqref{incl1_delta0}--\eqref{incl2_delta0}.
To this end, the \takeshi{demi-closedness} 
of the maximal monotone operator 
$\beta$ yields $\xi\in\beta(u)$ almost everywhere in $Q$
by the classical results in \cite{Bar10, Bre73}.
Moreover, testing \eqref{var2_delta_pos} by $\b u^\delta$ gives
\begin{align*}
  &\int_Q|\nabla u^\delta|^2 + \delta\int_\Sigma|\nabla_\Gamma u_\Gamma^\delta|^2
  +\int_Q\xi^\delta u^\delta + \int_\Sigma\xi_\Gamma^\delta u_\Gamma^\delta\\
  &\qquad=\int_Q(\mu^\delta + f^\delta - \pi(u^\delta))u^\delta
  +\int_\Sigma(\mu_\Gamma^\delta + f_\Gamma^\delta - \pi_\Gamma(u_\Gamma^\delta))
  u_\Gamma^\delta,
\end{align*}
while testing \eqref{var2_delta0} by $\b u$ gives
\begin{align*}
  &\int_Q|\nabla u|^2
  +\int_Q\xi u + \int_0^T\langle\xi_\Gamma, u_\Gamma\rangle_{Z_\Gamma^*, Z_\Gamma}\\
  &\qquad=\int_Q(\mu + f - \pi(u))u
  +\int_\Sigma(\mu_\Gamma + f_\Gamma - \pi_\Gamma(u_\Gamma))
  u_\Gamma.
\end{align*}
By lower semicontinuity and weak-strong convergence we deduce that
\begin{align*}
  \limsup_{\delta\searrow0}\int_\Sigma\xi_\Gamma^\delta u_\Gamma^\delta
  &\leq\limsup_{\delta\searrow0}\int_Q(\mu^\delta + f^\delta - \pi(u^\delta))u^\delta
  +\limsup_{\delta\searrow0}
  \int_\Sigma(\mu_\Gamma^\delta + f_\Gamma^\delta - \pi_\Gamma(u_\Gamma^\delta))
  u_\Gamma^\delta\\
  &-\liminf_{\delta\searrow0}\int_Q|\nabla u^\delta|^2
  -\liminf_{\delta\searrow0}\int_Q\xi^\delta u^\delta\\
  &\leq \int_Q(\mu + f - \pi(u))u
  +\int_\Sigma(\mu_\Gamma + f_\Gamma - \pi_\Gamma(u_\Gamma))
  u_\Gamma - \int_Q|\nabla u|^2 - \int_Q\xi u,
\end{align*}
from which 
\beq
  \label{limsup}
  \limsup_{\delta\searrow0}\int_\Sigma\xi_\Gamma^\delta u_\Gamma^\delta
  \leq \int_0^T\langle\xi_\Gamma, u_\Gamma\rangle_{Z_\Gamma^*, Z_\Gamma}.
\eeq
Now, condition \eqref{incl2_delta_pos} and the 
subdifferential property
$\beta_\Gamma=\partial\widehat\beta_\Gamma$ yields 
\[
  \int_\Sigma\widehat\beta_\Gamma(u_\Gamma^\delta)
  +\int_\Sigma\xi_\Gamma^\delta(z_\Gamma-u_\Gamma^\delta)
  \leq \int_\Sigma\widehat\beta_\Gamma(z_\Gamma) 
  \qquad\forall\,z_\Gamma\in L^2(0,T; H_\Gamma).
\]
Choosing now $z_\Gamma\in L^2(0,T; V_\Gamma)$,
using the convergences \eqref{conv4} and \eqref{conv7},
the weak lower semicontinuity of $\widehat\beta_\Gamma$,
and \eqref{limsup}, we have
\begin{align*}
  &\int_\Sigma\widehat\beta_\Gamma(u_\Gamma)\leq 
  \liminf_{\delta\searrow0}\int_\Sigma\widehat\beta_\Gamma(u_\Gamma^\delta),\\
  &\int_0^T\langle\xi_\Gamma,z_\Gamma\rangle_{Z_\Gamma^*,Z_\Gamma}
  =\lim_{\delta\searrow0}\int_\Sigma\xi_\Gamma^\delta z_\Gamma,\\
  &-\int_0^T\langle\xi_\Gamma,u_\Gamma\rangle_{Z_\Gamma^*,Z_\Gamma}
  \leq -\limsup_{\delta\searrow0}
  \int_\Sigma\xi_\Gamma^\delta u_\Gamma^\delta
  =\liminf_{\delta\searrow0} 
  \left(- \int_\Sigma\xi_\Gamma^\delta u_\Gamma^\delta\right).
\end{align*}
Hence, passing to the $\liminf$ as $\delta\searrow0$ we obtain
\[
  \int_\Sigma\widehat\beta_\Gamma(u_\Gamma)
  +\int_0^T\langle\xi_\Gamma,z_\Gamma-u_\Gamma\rangle_{Z_\Gamma^*,Z_\Gamma}
  \leq \int_\Sigma\widehat\beta_\Gamma(z_\Gamma) 
  \qquad\forall\,z_\Gamma\in L^2(0,T; V_\Gamma).
\]
We infer now that such inequality holds also for all $z_\Gamma\in L^2(0,T; Z_\Gamma)$.
Indeed, given an arbitrary $z_\Gamma\in L^2(0,T; Z_\Gamma)$, 
for $\eps>0$ we can set
$z_\Gamma^\eps\in L^2(0,T; W_\Gamma)$ as the unique solution to the elliptic problem 
\[
  z_\Gamma^\eps - \eps\Delta_\Gamma z_\Gamma^\eps = z_\Gamma
  \quad\text{on } \Sigma.
\]
Then, it is not difficult to show by standard testing techniques \takeshi{(see\pier{, e.g.,}\ \cite[Lemma A.1]{CF16})} that 
\[
z_\Gamma^\eps\to z_\Gamma \quad\text{in } L^2(0,T;Z_\Gamma), \qquad
\widehat\beta_\Gamma(z_\Gamma^\eps)\leq \widehat\beta_\Gamma(z_\Gamma)
\quad\text{a.e.~on } \Sigma,
\]
so that letting $\eps\to0$ in the subdifferential relation we can conclude.
This shows that $(\b u, \b \mu, \b \xi)$ is a weak solution
to the system with $\delta=0$ in the sense of Definition~\ref{sol:delta0}.

\subsection{Continuous dependence}
Let $\{(\b u_i, \b\mu_i, \b\xi_i)\}_{i=1,2}$
be two weak solutions of the system \eqref{eq1}--\eqref{eq2_init}
in the sense of Definition~\ref{sol:delta0},
with respect to the data $\{(\b u_{0,i}, \b f_i)\}_{i=1,2}$. 
Then, setting $\takeshi{\bar{\b u}}:=\b u_1-\b u_2$, $\takeshi{\bar{\b \mu}}:=\b\mu_1-\b\mu_2$,
$\takeshi{\bar{\b \xi}}:=\b\xi_1-\b\xi_2$, $\takeshi{\bar{\b u}_0}:=\b u_{0,1}-\b u_{0,2}$, and 
$\takeshi{\bar{\b f}}:=\b f_1-\b f_2$, it holds that 
\begin{align}
  \label{diff1}
  \langle\partial_t \takeshi{\bar{\b u}},\b z\rangle_{\bsV^*,\bsV}
  +\int_\Omega\nabla \fukao{\bar \mu}\cdot\nabla z
  +\int_\Gamma\nabla_\Gamma \fukao{\bar \mu}_\Gamma\cdot\nabla_\Gamma z_\Gamma=0 \nonumber \\
  \hbox{for every $\b z\in\bV$, \, a.e. in $(0,T)$,}\\
  \label{diff2}
  (\takeshi{\bar{\b \mu}},\b z)_{\bsH}=
  \int_\Omega\nabla \fukao{\bar u}\cdot\nabla z
  +(\fukao{\bar \xi},z)_H+\langle \fukao{\bar \xi}_\Gamma,z_\Gamma \rangle_{Z_\Gamma^*,Z_\Gamma}
  +(\b \pi(\b u_1)- \takeshi{\b \pi}(\b u_2) - \takeshi{\bar{\b f}}, \b z)_{\bsH}\nonumber \\
  \hbox{for every $\b z\in\bZ$, \, a.e. in $(0,T)$.}
  \end{align}
Recalling that $m(\b u_{0,1})=m(\b u_{0,2})$, 
testing \eqref{diff1} by \takeshi{$\b 1/(|\Omega| + |\Gamma|)$} it follows that 
\beq\label{zero_m}
  m(\takeshi{\bar{\b u}}(t))=0 \quad\forall\,t\in[0,T].
\eeq
Consequently, we can test \eqref{diff1} by $\mathcal L^{-1} \takeshi{\bar{\b u}}$,
\eqref{diff2} by $-\takeshi{\bar{\b u}}$, integrate in time, and add the respective equations.
Noting that there is a cancellation (as pointed out in \eqref{est_aux}),  thanks 
to the chain rule \eqref{chain} we obtain that
\begin{align*}
  \frac12\norm{ \takeshi{\bar{\b u}}(t)}_{*}^2 + 
  \int_{Q_t}|\nabla \takeshi{\bar u}|^2 + \int_{Q_t} \takeshi{\bar \xi} \takeshi{\bar u} + 
  \int_0^t\langle \takeshi{\bar \xi}_\Gamma, \takeshi{\bar u}_\Gamma\rangle
  =\frac12\norm{\takeshi{\bar{\b u}}_0}_{*}^2 + 
  \int_0^t(\takeshi{\bar{\b f}} + \b\pi(\b u_2)-\b\pi(\b u_1), \takeshi{\bar{\b u}})_{\bsH}.
\end{align*}
Exploiting condition \eqref{zero_m},
the monotonicity of $\beta$ and 
$\beta_\Gamma$, the Lipschitz continuity of $\b\pi$, 
and the Young inequality,
we infer that 
\begin{align*}
  \norm{\takeshi{\bar{\b u}}(t)}_{\bsV^*}^2 + 
  \int_0^t\norm{\takeshi{\bar{\b u}}}_{\bsZ}^2 
  \leq C\left(\norm{\takeshi{\bar{\b u}}_0}_{\bsV^*}^2 + 
  \int_0^t\norm{\takeshi{\bar{\b f}} }_{\bsH}^2 + \int_0^t\norm{\takeshi{\bar{\b u}}}_{\bsH}^2\right).
\end{align*}
At this point, applying the Ehrling inequality \eqref{ehrling} on the right-hand side, 
choosing $\eps>0$ sufficiently small, and rearranging the terms, 
we deduce, possibly renominating the constant $C$, that 
 \begin{align*}
  \norm{\takeshi{\bar{\b u}}(t)}_{\bsV^*}^2 + 
  \frac12\int_0^t\norm{\takeshi{\bar{\b u}}}_{\bsZ}^2 
  \leq C\left(\norm{\takeshi{\bar{\b u}}_0}_{\bsV^*}^2 + 
  \int_0^t\norm{\takeshi{\bar{\b f}} }_{\bsH}^2 + \int_0^t\norm{\takeshi{\bar{\b u}} }_{\bsV^*}^2\right).
\end{align*}
Then, the conclusion follows by applying the Gronwall lemma.
The proof of Theorem~\ref{thm:conv} is thus complete.

\subsection{Refined convergence}
Here we prove Theorem~\ref{thm:conv2}.
We show that the extra assumption \eqref{ip_extra} on the graphs
yields additional estimates on the solutions.

First of all, 
since assumption \eqref{ip_extra} induces the analogous 
inequalities on the respective  Yosida approximations
(details are given in \cite[Appendix]{CF20}),
the estimate \eqref{est8'} implies
\[
  \|\beta_\lambda(u_{\lambda}^\delta)\|_{L^2(0,T; H)}+
  \|\beta_{\Gamma,\lambda}(u_{\lambda,\Gamma}^\delta)\|_{L^2(0,T; H_\Gamma)}\leq C
  \qquad\forall\,\lambda\in(0,\lambda_0),
\]
from which, taking the limit as $\lambda\searrow0$,
\beq
  \label{est13}
  \|\xi^\delta\|_{L^2(0,T; H)}+
  \|\xi^\delta_\Gamma\|_{L^2(0,T; H_\Gamma)}\leq C.
\eeq
Now, recalling Remark~\ref{rem1}, by comparison
in \eqref{pier2} and using 
the estimate \eqref{est10}  we have
\beq\label{est14}
  \| \partial_\nu u^\delta - \delta\Delta_\Gamma u_\Gamma^\delta\|_{L^2(0,T; H_\Gamma)}
  +\delta\|\Delta_\Gamma u_\Gamma^\delta\|_{L^2(0,T; Z_\Gamma^*)} \leq C.
\eeq
At this point, one can pass to the limit as $\delta\searrow0$ as above
by exploiting the additional estimates  \eqref{est13}--\eqref{est14},
which yield the extra regularities
\[
  \xi_\Gamma \in L^2(0,T; H_\Gamma), \qquad
  \partial_\nu u \in L^2(0,T; H_\Gamma).
\]
By elliptic regularity (see \cite[Thm.~3.2]{BG87}) this implies that 
\[  
  u  \in L^2(0,T; H^{3/2}(\Omega)),
\]
hence also by the trace theory that 
\[
  u_\Gamma \in L^2(0,T; V_\Gamma).
\]
Eventually, the pointwise inclusion $\xi_\Gamma\in\beta_\Gamma(u_\Gamma)$
almost everywhere on $\Sigma$ can be obtained arguing as in 
Remark~\ref{rem3}. This concludes the proof of Theorem~\ref{thm:conv2}.

\subsection{Error estimate}
Here we prove Theorem~\ref{thm:conv3}.
To this end, taking the difference of the variational formulations
\eqref{var1_delta_pos}--\eqref{var2_delta_pos}
and \eqref{var1_delta0}--\eqref{var2_delta0}, we obtain, thanks to
the additional regularity of $\xi_\Gamma$, that
\begin{align}
  \label{diff1_err}
  \langle\partial_t(\b u^\delta-\b u),\b z\rangle_{\bsV^*,\bsV}
  +\int_\Omega\nabla(\mu^\delta-\mu)\cdot\nabla z
  +\int_\Gamma\nabla_\Gamma(\mu^\delta_\Gamma-\mu_\Gamma)
  \cdot\nabla_\Gamma z_\Gamma=0
\end{align}
and
\begin{align}
  \nonumber
  (\b\mu^\delta - \b\mu,\b z)_{\bsH}&=
  \int_\Omega\nabla (u^\delta-u)\cdot\nabla z
  +\delta\int_\Gamma\nabla_\Gamma u_\Gamma^\delta
  \cdot\nabla z_\Gamma\\
  \label{diff2_err}
  &+(\b\xi^\delta - \b\xi + \b\pi(\b u^\delta)-\takeshi{\b \pi}(\b u) +\b f - \b f^\delta, \b z)_{\bsH}
\end{align}
for every $\b z\in\bV$, almost everywhere in $(0,T)$.
Now, since $m(\b u_0^\delta)=m(\b u_0)$ by assumption, 
testing equation \eqref{diff1_err} by \takeshi{$\b 1/(|\Omega| + |\Gamma|)$} we infer that 
\beq\label{zero_m_err}
  m((\b u^\delta - \b u)(t))=0 \quad\forall\,t\in[0,T].
\eeq
Hence, one can test \eqref{diff1_err} by $\mathcal L^{-1}(\b u^\delta-\b u)$,
\eqref{diff2_err} by $-(\b u^\delta-\b u)$, 
integrate in time, and add the respective equations:
taking into account the usual cancellation of terms and 
\eqref{chain}, we obtain 
\begin{align*}
  &\frac12\|(\b u^\delta- \b u)(t)\|_{*}^2 + 
  \int_{Q_t}|\nabla (u^\delta - u)|^2 + \delta\int_{\Sigma_t}
  |\nabla_\Gamma u^\delta_\Gamma|^2+
  \int_0^t(\b\xi^\delta-\b\xi,\b u^\delta-\b u)_{\bsH} \\
  &=\frac12\|\b u_0^\delta - \b u_0\|_{*}^2 
  +\delta\int_{\Sigma_t}\nabla_\Gamma u^\delta_\Gamma\cdot
  \nabla_\Gamma u_\Gamma+ 
  \int_0^t(\b f^\delta - \b f + \b\pi(\b u)-\b\pi(\b u^\delta), \b u^\delta - \b u)_{\bsH}.
\end{align*}
At this point, taking condition \eqref{zero_m} into account
on the left-hand side together with
the monotonicity of $\beta$ and 
$\beta_\Gamma$, and using
the Lipschitz continuity of $\b\pi$, 
and the Young inequality on the right-hand side,
we infer that 
\begin{align*}
  &\|(\b u^\delta - \b u)(t)\|_{\bsV^*}^2 + 
  \int_0^t\|\b u^\delta - \b u\|_{\bsZ}^2 
  +\delta\int_0^t\|\nabla_\Gamma u_\Gamma^\delta\|_{H_\Gamma}^2 \\
  &\leq C\left(\delta\int_{\Sigma_t}\nabla_\Gamma u^\delta_\Gamma\cdot
  \nabla_\Gamma u_\Gamma+
  \|\b u_0^\delta - \b u_0\|_{\bsV^*}^2 + 
  \int_0^t\|\b f^\delta - \b f\|_{\bsH}^2 
  + \int_0^t\|\b u^\delta - \b u\|_{\bsH}^2\right).
\end{align*}
Now, using the Young inequality and the regularity of $\b u$ one has
\[
  \delta\int_{\Sigma_t}\nabla_\Gamma u^\delta_\Gamma\cdot
  \nabla_\Gamma u_\Gamma \leq 
  \frac\delta2\int_0^t\|\nabla_\Gamma u_\Gamma^\delta\|_{H_\Gamma}^2+
  \frac\delta2\|\b u\|_{L^2(0,T; \bsV)}^2.
\]
Consequently,
using the Ehrling inequality \eqref{ehrling}
on the right-hand side and rearranging the terms we obtain,
updating the value of $C$,
\begin{align}
  \nonumber
  &\|(\b u^\delta-\b u)(t)\|_{\bsV^*}^2 + 
  \int_0^t\|\b u^\delta - \b u\|_{\bsZ}^2
  +\frac\delta2\int_0^t\|\nabla_\Gamma u_\Gamma^\delta\|_{H_\Gamma}^2 \\
  \label{error_aux}
  &\leq C\left(\delta+\|\b u_0^\delta - \b u_0\|_{\bsV^*}^2 + 
  \|\b f^\delta - \b f\|_{L^2(0,T;\bsH)}^2 + \int_0^t\|\b u^\delta - \b u\|_{\bsV^*}^2\right).
\end{align}
The Gronwall lemma yields the desired error estimate, hence also 
the rate of convergence. Moreover, 
we note that this implies also the boundedness of \takeshi{$\{u_\Gamma^\delta\}_{\delta \in (0,1)}$}
in $L^2(0,T; V_\Gamma)$, from which the weak convergence 
\[
  \b u^\delta \wto \b u \qquad\text{in } L^2(0,T;\b V)
\]
follows as $\delta\searrow0$.
 This concludes the proof of Theorem~\ref{thm:conv3}.

\appendix
\section{Appendix}
\setcounter{equation}{0}
\label{appendix}

\begin{lem}
\label{lem:A1}
  In the setting of Section~\ref{sec:main}, there exists a constant $C_p>0$ such that 
  \[
  \|z\|_V\leq C_p\|\nabla z\|_H \qquad\forall\, \b z=(z,z_\Gamma)\in \b Z_0.
  \] 
\end{lem}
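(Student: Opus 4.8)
The plan is to establish the inequality by the standard compactness--contradiction argument, exploiting the fact that the linear functional
$z\mapsto \int_\Omega z + \int_\Gamma z_{|\Gamma}$
is continuous on $V=H^1(\Omega)$ and does not vanish on the constants.

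First I would note that, by the trace theorem, $z\mapsto z_{|\Gamma}$ is continuous from $V$ into $H_\Gamma=L^2(\Gamma)$, so that
$\Lambda:V\to\erre$, $\Lambda(z):=\int_\Omega z + \int_\Gamma z_{|\Gamma}$,
is a well-defined bounded linear functional with $\Lambda(1)=|\Omega|+|\Gamma|>0$. Moreover $\b z=(z,z_\Gamma)\in\bZ_0$ means precisely that $z\in V$, $z_\Gamma=z_{|\Gamma}$, and $\Lambda(z)=0$. Hence it suffices to prove the generalized Poincar\'e inequality
\[
  \|z\|_V\leq C_p\bigl(\|\nabla z\|_H+|\Lambda(z)|\bigr)\qquad\forall\,z\in V,
\]
and then restrict to $\ker\Lambda$.

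To prove this inequality I would argue by contradiction: if it fails, there is a sequence $\{z_n\}\subset V$ with $\|z_n\|_V=1$ for every $n$ and $\|\nabla z_n\|_H+|\Lambda(z_n)|\to0$. Since $V\embed H$ compactly, a subsequence (not relabelled) satisfies $z_n\to z$ strongly in $H$; combined with $\nabla z_n\to 0$ strongly in $H$, this shows that $\{z_n\}$ is Cauchy in $V$, so $z_n\to z$ strongly in $V$ with $\|z\|_V=1$ and $\nabla z=0$, i.e.\ $z$ is a nonzero constant. By continuity of $\Lambda$ we get $\Lambda(z)=\lim_n\Lambda(z_n)=0$, that is $z\,(|\Omega|+|\Gamma|)=0$, whence $z=0$, contradicting $\|z\|_V=1$. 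This proves the inequality and therefore the lemma, with $C_p$ the constant obtained on $\ker\Lambda$.

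The argument is essentially routine; the only point needing a little care is the use of the trace theorem to give meaning to the boundary term in $\Lambda$ on $V$ and to pass to the limit in it — which is exactly why the statement is phrased on $\bZ_0$, where the compatibility $z_\Gamma=z_{|\Gamma}$ is imposed. I do not expect any genuine obstacle.
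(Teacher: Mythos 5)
Your proof is correct and follows essentially the same compactness--contradiction argument as the paper: normalize a hypothetical counterexample sequence, use the compact embedding $V\embed H$ and the vanishing gradients to extract a limit that is a nonzero constant, and then use the vanishing generalized mean (your functional $\Lambda$, the paper's $m$) to force that constant to be zero, a contradiction. The only cosmetic differences are that you prove the slightly more general inequality $\|z\|_V\leq C_p(\|\nabla z\|_H+|\Lambda(z)|)$ on all of $V$ and pass to strong convergence in $V$, whereas the paper works directly on $\b Z_0$ with weak $V$-convergence; both are sound.
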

\begin{proof}
It is enough to prove that there exists $C>0$ such that 
\[
  \|z\|_H\leq C\|\nabla z\|_H \qquad\forall\, \b z=(z,z_\Gamma)\in \b Z_0.
\] 
By contradiction, suppose that there exists a sequence \takeshi{$\{
\b z_n\}_{n \in \mathbb{N}} \subset\b Z_0$} 
such that 
\[
  \|z_n\|_H> n\|\nabla z_n\|_H \qquad\forall\, n\in\enne.
\] 
Then, setting $\b w_n:=\b z_n/\|z_n\|_H$, $n\in\enne$, it holds for every $n\in\enne$ that 
\[
  \|w_n\|_H=1, \qquad
  \|\nabla w_n\|_H<\frac1n\,, \qquad
  m(\b w_n)=0.
\]
We deduce that there exists $w\in V$ such that 
\[
  w_n\to w \quad\text{in } H, \qquad
  w_n\wto w \quad\text{in } V, \qquad
  \|w\|_H=1, \qquad
  \nabla w=0\,.
\]
In particular, setting $w_\Gamma:=w_{|\Gamma}$
it holds that $\b w:=(w,w_\Gamma)\in\b Z$. Since 
$w$ is constant with $\|w\|_H=1$, it necessarily holds that $m(\b w)\neq0$.
However, the weak convergence
$w_n\wto w$ in $V$ yields in particular that 
\[
  0=m(\b w_n)\to m(\b w),
\]
which is absurd. This completes the proof.
\end{proof}

\begin{prop}
  \label{prop:A2}
  Let $\b u_0$ satisfy \eqref{ip1_delta0}.
  Then, if \eqref{ip_extra} holds there exists a sequence 
  $\{\b u_0^\delta\}_{\delta\in(0,1)}$ satisfying \eqref{ip1} and
  such that $\b u_0^\delta \wto \b u$ in  $\b Z$ as $\delta\searrow0$.
\end{prop}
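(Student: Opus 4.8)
The plan is to regularise $\b u_0=(u_0,u_{0\Gamma})$ in two stages. First one smooths the boundary component $u_{0\Gamma}\in Z_\Gamma$ into an element of $V_\Gamma=H^1(\Gamma)$ whose gradient norm blows up at most like $\delta^{-1/2}$; then one replaces the bulk component by a function sharing the new trace, so that the resulting pair lies in $\bV$, while keeping the convex potentials $\int_\Omega\widehat\beta$ and $\int_\Gamma\widehat\beta_\Gamma$ uniformly bounded. Once the bound \eqref{ip1} and a uniform $\bZ$-estimate are available, the weak convergence $\b u_0^\delta\wto\b u_0$ in $\bZ$ follows from reflexivity together with strong convergence in $\bH$, which pins down the limit.

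\textbf{Boundary smoothing.} For $\delta\in(0,1)$ let $v^\delta:=(I-\delta\Delta_\Gamma)^{-1}u_{0\Gamma}\in W_\Gamma$, the solution of $v^\delta-\delta\Delta_\Gamma v^\delta=u_{0\Gamma}$ on $\Gamma$. Testing this identity by $v^\delta$, by $-\Delta_\Gamma v^\delta$, and by $\beta_{\Gamma,\lambda}(v^\delta)$ (then letting $\lambda\searrow0$) -- exactly the testing technique already used in \cite[Lemma~A.1]{CF16} -- one gets
\[
  \|v^\delta\|_{H_\Gamma}\le\|u_{0\Gamma}\|_{H_\Gamma},\qquad
  \delta\|\nabla_\Gamma v^\delta\|_{H_\Gamma}^2\le\tfrac14\|u_{0\Gamma}\|_{H_\Gamma}^2,\qquad
  \int_\Gamma\widehat\beta_\Gamma(v^\delta)\le\int_\Gamma\widehat\beta_\Gamma(u_{0\Gamma}),
\]
together with $\int_\Gamma v^\delta=\int_\Gamma u_{0\Gamma}$ and $v^\delta\to u_{0\Gamma}$ in $Z_\Gamma$ as $\delta\searrow0$. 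Moreover $v^\delta$ is a contractive average of $u_{0\Gamma}$, hence valued in $\overline{D(\beta_\Gamma)}$ a.e.\ on $\Gamma$, and -- by $D(\beta)=D(\beta_\Gamma)$ in \eqref{ip_extra} -- in $\overline{D(\beta)}$ as well.

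\textbf{Bulk lifting and conclusion.} It remains to extend $v^\delta$ to some $u_0^\delta\in V$ with $u_0^\delta|_\Gamma=v^\delta$ (so $\b u_0^\delta:=(u_0^\delta,v^\delta)\in\bV$), with $\{u_0^\delta\}$ bounded in $V$, $u_0^\delta\to u_0$ in $H$, and $\int_\Omega\widehat\beta(u_0^\delta)$ bounded uniformly in $\delta$. A convenient choice is $u_0^\delta:=\mathsf P\big(u_0+\mathcal R(v^\delta-u_{0\Gamma})\big)$, with $\mathcal R:Z_\Gamma\to V$ a fixed bounded right inverse of the trace and $\mathsf P$ the pointwise projection onto the interval $\overline{D(\beta)}$ (equivalently, $u_0^\delta$ can be taken as the solution of the elliptic problem $u_0^\delta-\varepsilon_\delta\Delta u_0^\delta+\varepsilon_\delta\,\partial\widehat\beta(u_0^\delta)\ni u_0$ with Dirichlet datum $v^\delta$ and $\varepsilon_\delta\searrow0$). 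Since $v^\delta\in\overline{D(\beta)}$ the projection does not alter the trace, and being $1$-Lipschitz it keeps $\|\nabla u_0^\delta\|_H$ bounded and gives $u_0^\delta\to u_0$ in $H$; combined with Step~1 and the equivalent norm \eqref{norm2} this yields that $\{\b u_0^\delta\}$ is bounded in $\bZ$ and $\b u_0^\delta\to\b u_0$ in $\bH$, hence $\b u_0^\delta\wto\b u_0$ in $\bZ$. The uniform bound on $\int_\Omega\widehat\beta(u_0^\delta)$ together with Step~1 is then exactly \eqref{ip1}, and $m(\b u_0^\delta)\to m(\b u_0)\in\operatorname{Int}D(\beta_\Gamma)$ guarantees \eqref{ip1_delta_pos} for all small $\delta$, while for the remaining values one simply reuses a fixed admissible datum, which affects neither \eqref{ip1} nor the limit $\delta\searrow0$. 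The genuinely delicate point -- and where assumption \eqref{ip_extra} enters -- is the uniform control of the bulk potential $\int_\Omega\widehat\beta(u_0^\delta)$: in the general singular case $\widehat\beta$ has to be estimated through $\widehat\beta_\Gamma$, using $D(\beta)=D(\beta_\Gamma)$ and the two-sided comparison of the minimal sections (which in integrated form bounds $\widehat\beta$ and $\widehat\beta_\Gamma$ by each other up to a quadratic remainder), so that the bound on $\int_\Gamma\widehat\beta_\Gamma(u_{0\Gamma})$ coming from \eqref{ip1_delta0} can be transported to the bulk. I expect this transfer to be the main obstacle.
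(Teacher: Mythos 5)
Your boundary step (smoothing $u_{0\Gamma}$ with the resolvent $(I-\delta\Delta_\Gamma)^{-1}$) is sound, but the construction breaks down exactly at the point you flag: the uniform bound on $\big\|\widehat\beta(u_0^\delta)\big\|_{L^1(\Omega)}$ required by \eqref{ip1} is not established, and the route you suggest cannot close it. After the lifting, the bulk datum is $u_0^\delta=\mathsf P\big(u_0+\mathcal R(v^\delta-u_{0\Gamma})\big)$, i.e.\ a perturbation of $u_0$ that is small only in $H^1(\Omega)$, not in $L^\infty(\Omega)$, composed with the pointwise projection onto $\overline{D(\beta)}$. If $\widehat\beta$ is singular (say $D(\beta)=(-1,1)$ with $\widehat\beta(r)\to+\infty$ as $r\to\pm1$, which is admissible under {\bf A1} and \eqref{ip_extra} with $\beta_\Gamma=\beta$), the perturbation can push $u_0$ onto or arbitrarily close to the endpoints of $\overline{D(\beta)}$ on a set of positive measure; the projection keeps the values in $\overline{D(\beta)}$ but does nothing to control $\widehat\beta$ there, so $\int_\Omega\widehat\beta(u_0^\delta)$ need not be bounded uniformly in $\delta$ --- it may even fail to be finite for fixed $\delta$, so \eqref{ip1_delta_pos} itself is in jeopardy. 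The transfer through \eqref{ip_extra} does not help: that assumption compares $\widehat\beta$ and $\widehat\beta_\Gamma$ pointwise in the real variable, so it would let you trade $\int_\Omega\widehat\beta(u_0^\delta)$ for $\int_\Omega\widehat\beta_\Gamma(u_0^\delta)$, a \emph{bulk} integral of the boundary potential, whereas the only potential bound you carry from \eqref{ip1_delta0} and from your Step~1 lives on $\Gamma$. In short, trace lifting plus truncation has no mechanism forcing the convex potentials not to increase, and that is the whole difficulty.

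The paper resolves this by regularising the bulk datum itself with an operator that is compatible with the monotone structure: $\b u_0^\delta$ is defined by the nonlinear elliptic system \eqref{cf1}--\eqref{cf2}, namely $u_0^\delta-\delta\Delta u_0^\delta=u_0$ in $\Omega$ with the boundary condition $-\partial_\nu u_0^\delta\in\beta_\Gamma(u_{0\Gamma}^\delta)$. Testing by $u_0^\delta-\Delta u_0^\delta$ gives the $V$-bound and $\|\widehat\beta_\Gamma(u_{0\Gamma}^\delta)\|_{L^1(\Gamma)}\leq C$ as in \eqref{cf3}; testing the Yosida-regularised problem by $\beta_\lambda(u_0^{\delta,\lambda})$ and using \eqref{ip_extra} on the Yosida approximations gives precisely the missing bound $\|\widehat\beta(u_0^\delta)\|_{L^1(\Omega)}\leq C$ together with $\delta\|\partial_\nu u_0^\delta\|_{H_\Gamma}^2\leq C$ as in \eqref{cf4}, and then elliptic regularity and trace theory yield $\delta\|u_{0\Gamma}^\delta\|_{V_\Gamma}^2\leq C$ as in \eqref{cf5}, i.e.\ the surface-gradient part of \eqref{ip1}. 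The monotonicity of the regularising operator (and of the boundary graph in the boundary condition) is exactly what makes both convex potentials non-increasing under the approximation; this is the idea your two-stage construction lacks. If you want to keep your approach, you would have to replace the lifting-plus-projection step by a bulk regularisation with the same property --- for instance the variational problem you mention parenthetically (with the $\partial\widehat\beta$ term and a monotone boundary condition) --- but then you are essentially reconstructing the paper's system \eqref{cf1}--\eqref{cf2} and must still carry out the Yosida argument to justify the potential bounds.
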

\begin{proof}
  In order to introduce a family $\{\b u_0^\delta\}_{\delta\in(0,1)}$
  we consider the elliptic system
  \begin{align}
  \label{cf1}
  u_0^\delta - \delta\Delta u_0^\delta = u_0 \qquad&\text{a.e.~in } \Omega,\\
  \label{cf2}
  u_{0|\Gamma}^\delta=u_{0,\Gamma}^\delta,\quad
  -\partial_\nu u_0^\delta \in
  \beta_\Gamma(u_{0,\Gamma}^\delta)
  \qquad&\text{a.e.~on } \Gamma.
  \end{align}
  Note that \eqref{cf1}--\eqref{cf2} admits a unique solution 
  $\b u_0^\delta=(u_0^\delta, u_{0,\Gamma}^\delta)$
  with $u_0^\delta \in W$, as proved, e.g., in\cite[Prop.~2.9, p.~62]{Bar10}.
  Now, setting $\xi_{0,\Gamma}^\delta:=-\partial_\nu u_0^\delta\in Z_\Gamma$
  we have that \takeshi{$\xi_{0,\Gamma}^\delta\in\beta_\Gamma(u_{0,\Gamma}^\delta)$}
  almost everywhere on $\Gamma$.
  So, testing \eqref{cf1} by $u_{0}^\delta-\Delta u_0^\delta$, we
  integrate by parts with the aid of
  the boundary conditions in \eqref{cf2}.
  Thanks to the Young inequality
  we obtain exactly 
  \begin{align*}
  &\frac12\|u_0^\delta\|_H^2 +\frac12\|\nabla u_0^\delta\|_H^2
  +(\xi_{0,\Gamma}^\delta, u_{0,\Gamma}^\delta-u_{0,\Gamma})_{H_\Gamma}\\
  &\qquad+\delta\|\nabla u_0^\delta\|_H^2 +
   \delta\|\Delta u_{0}^\delta\|_{H}^2
   +\delta(\xi_{0,\Gamma}^\delta, u_{0,\Gamma}^\delta)_{H_\Gamma}\\
   &\leq
   \frac12\|u_0\|_H^2 + \frac12\|\nabla u_0\|_H^2.
  \end{align*}
  At this point, recalling that $\beta_\Gamma=\partial\widehat\beta_\Gamma$,
  we have that 
  \[
  (\xi_{0,\Gamma}^\delta, u_{0,\Gamma}^\delta-u_{0,\Gamma})_{H_\Gamma}
  \geq \int_\Gamma\widehat\beta_\Gamma(u_{0,\Gamma}^\delta)
  -\int_\Gamma\widehat\beta_\Gamma(u_{0,\Gamma}),
  \]
  while by monotonicity of $\beta_\Gamma$ and the fact that $0\in\beta_\Gamma(0)$
  it holds that 
  \[
  \delta(\xi_{0,\Gamma}^\delta, u_{0,\Gamma}^\delta)_{H_\Gamma} \geq0.
  \]
  Consequently, we infer that 
  \beq\label{cf3}
  \frac12\|u_{0}^\delta\|_V^2 + 
  \|\widehat\beta_\Gamma(u_{0,\Gamma}^\delta)\|_{L^1(\Gamma)}
   +\delta\|\Delta u_{0}^\delta\|_{H}^2
   \leq \frac12\|u_{0}\|_V^2
   +\|\widehat\beta_\Gamma(u_{0,\Gamma})\|_{L^1(\Gamma)},
  \eeq
  where the right-hand side is finite due to \eqref{ip1_delta0}.
  This readily implies that there exists $v_0\in V$ such that,
  in principle along a subsequence,
  \[
  u_0^\delta \wto v_0 \quad\text{in } V, \qquad
  \delta\Delta u_0^\delta \to 0 \quad\text{in } H.
  \]
  Passing to the limit in \eqref{cf1} we realise that 
  $u_{0,\delta}\to u_0$ in $H$
  along the entire family $\delta\searrow0$,
  hence also that $u_0=v_0$ almost everywhere in $\Omega$.
  Moreover, we recall that the system \eqref{cf1}--\eqref{cf2} can be seen
  as the limit as $\lambda\searrow0$ of the corresponding one 
  where $\beta_\Gamma$ is replaced by its Yosida approximation 
  $\beta_{\Gamma, \lambda}$. Hence, 
  testing the respective equation 
  approximating \eqref{cf1} by $\beta_{\lambda}(u_0^{\delta,\lambda})$,
  where $\beta_\lambda$ is the Yosida approximation of $\beta$,
  we obtain
  \[
  \int_\Omega \beta_{\lambda}(u_0^{\delta,\lambda})(u_{0}^{\delta,\lambda}-u_0)
  +\delta\int_\Omega\beta_{\lambda}'(u_0^{\delta,\lambda})
  |\nabla u_0^{\delta,\lambda}|^2
  +\delta\int_\Gamma\beta_{\lambda}(u_{0,\Gamma}^{\delta,\lambda})
  \beta_{\Gamma,\lambda}(u_{0,\Gamma}^{\delta,\lambda})
  =0,
  \]
  which yields by monotonicity and the subdifferential relation for 
  $\widehat\beta_{\lambda}$ that
  \[
  \int_\Omega \widehat\beta_{\lambda}(u_0^{\delta,\lambda})  
  +\delta\int_\Gamma\beta_{\lambda}(u_{0,\Gamma}^{\delta,\lambda})
  \beta_{\Gamma,\lambda}(u_{0,\Gamma}^{\delta,\lambda})
  \leq\int_\Omega \widehat\beta_{\lambda}(u_0). 
  \]
  Hence, exploiting \eqref{ip_extra} on the Yosida approximations as
  \[
  \int_\Gamma\beta_{\lambda}(u_{0,\Gamma}^{\delta,\lambda})
  \beta_{\Gamma,\lambda}(u_{0,\Gamma}^{\delta,\lambda})
  \geq\frac1{2M}\int_\Gamma|\beta_{\Gamma,\lambda}
  (u_{0,\Gamma}^{\delta,\lambda})|^2 - C,
  \]
  we infer that 
  \[
  \int_\Omega \widehat\beta_{\lambda}(u_0^{\delta,\lambda})  
  +\frac\delta{2M}\int_\Gamma|\beta_{\Gamma,\lambda}
  (u_{0,\Gamma}^{\delta,\lambda})|^2
  \leq C + \int_\Omega \widehat\beta_{\lambda}(u_0).
  \]
  Consequently, taking the limit as $\lambda\searrow0$ and 
  using assumption \eqref{ip1_delta0} it is possible to prove that
    \[
  \int_\Omega \widehat\beta(u_0^{\delta})  
  +\frac\delta{2M}\int_\Gamma|\xi_{0,\Gamma}^\delta|^2
  \leq C + \int_\Omega \widehat\beta(u_0),
  \]
  which by comparison in \eqref{cf2} implies in particular that 
  \beq
  \label{cf4}
  \big\|\widehat\beta(u_0^\delta)\big\|_{L^1(\Omega)}+
  \delta\|\partial_\nu u_0^\delta\|^2_{H_\Gamma}\leq C.
  \eeq
  Now, collecting the information given by \eqref{cf3} and \eqref{cf4},
  using the elliptic regularity theory \cite[Thm.~3.2, p.~1.79]{BG87} and
  the trace theorems \cite[Thm~2.27, p.~1.64]{BG87} we infer that 
  \beq\label{cf5}
  \delta\|u_{0,\Gamma}^\delta\|^2_{V_\Gamma}\leq C.
  \eeq
  Then, the estimates \eqref{cf3}, \eqref{cf4}, and \eqref{cf5}
  allow us to conclude the proof.
\end{proof}


\section*{Acknowledgments}
This research received a support from the Italian Ministry of Education, 
University and Research~(MIUR): Dipartimenti di Eccellenza Program (2018--2022) 
-- Dept.~of Mathematics ``F.~Casorati'', University of Pavia. 
TF acknowledges the support from the JSPS KAKENHI 
Grant-in-Aid for Scientific Research(C), Japan, Grant Number 21K03309 and 
from the Grant Program of The Sumitomo Foundation, Grant Number 190367.
PC and LS gratefully mention their affiliation
to the GNAMPA (Gruppo Nazionale per l'Analisi Matematica, 
la Probabilit\`a e le loro Applicazioni) of INdAM (Isti\-tuto 
Nazionale di Alta Matematica). Moreover, PC aims to point out his collaboration,
as Research Associate, to the IMATI -- C.N.R. Pavia, Italy.
LS was partially funded by the Austrian Science Fund (FWF)
through the Lise Meitner grant M 2876.

\end{document}